  \def\@seccntformat#1{%
    \@nameuse{@seccnt@prefix@#1}%
    \@nameuse{the#1}%
    \@nameuse{@seccnt@postfix@#1}%
    \@nameuse{@seccnt@afterskip@#1}}
  \def\@seccnt@prefix@section{}
  \def\@seccnt@postfix@section{.}
  \def\@seccnt@afterskip@section{\hspace{.5em}}
  \def\@seccnt@prefix@subsection{}
  \def\@seccnt@postfix@subsection{.}
  \def\@seccnt@afterskip@subsection{\hspace{.5em}}
\renewcommand\section{
  \@startsection{section}{3}{\z@}%
  {-3.25ex\@plus -1ex \@minus -.2ex}%
  {1.5ex \@plus .2ex}%
  {\normalfont\normalsize\bfseries\mathversion{bold}}}
\renewcommand\subsection{
  \@startsection{subsection}{3}{\z@}%
  {-3.25ex\@plus -1ex \@minus -.2ex}%
  {1.5ex \@plus .2ex}%
  {\normalfont\normalsize\bfseries\mathversion{bold}}}
\makeatletter \@addtoreset{equation}{section} \makeatother
\renewcommand{\theequation}{\arabic{section}.\arabic{equation}}
\let\oldthebibliography\thebibliography
\renewcommand\thebibliography[1]{
  \oldthebibliography{#1}\setlength{\itemsep}{0.4ex}}
\theoremstyle{plain}
\newtheorem{thm}{Theorem}[section]
\newtheorem{prop}[thm]{Proposition}
\newtheorem{cor}[thm]{Corollary}
\newtheorem{conj}[thm]{Conjecture}
\theoremstyle{definition}
\newtheorem{algo}[thm]{Algorithm}
\newtheorem{dfn}[thm]{Definition}
\theoremstyle{remark}
\theoremstyle{plain}
\newtheorem*{thm*}{Theorem}
\newtheorem*{lem*}{Lemma}
\newtheorem*{prop*}{Proposition}
\newtheorem*{cor*}{Corollary}
\newtheorem*{conj*}{Conjecture}
\theoremstyle{definition}
\newtheorem*{algo*}{Algorithm}
\newtheorem*{ass*}{Assumption}
\newtheorem*{dfn*}{Definition}
\theoremstyle{remark}
\newtheorem*{rem*}{Remark}
\newcommand{\atmark}{{{\scriptsize\hspace{-.1em}%
  \raisebox{-.65ex}{\Large$\circ$}\hspace{-.83em}%
  \raisebox{-.03ex}{\footnotesize$a$}}\hspace{.15em}}}
\newcommand{\nn}{\nonumber}
\newcommand{\grp}[1]{\mathrm{#1}}
\newcommand{\bvec}[1]{\boldsymbol{#1}}
\newcommand{\bbC}{\mathbb{C}}
\newcommand{\bbZ}{\mathbb{Z}}
\newcommand{\bbH}{\mathbb{H}}
\newcommand{\ri}{{i}}
\newcommand{\varth}{\vartheta}
\newcommand{\rw}{\mathrm{w}}
\newcommand{\cR}{\mathcal{R}}
\newcommand{\vecal}{{\boldsymbol{\alpha}}}
\newcommand{\vece}{{\boldsymbol{\mathrm{e}}}}
\newcommand{\vecL}{{\boldsymbol{\Lambda}}}
\newcommand{\vecv}{{\boldsymbol{v}}}
\newcommand{\vecw}{{\boldsymbol{w}}}
\newcommand{\vecz}{{\boldsymbol{z}}}
\newcommand{\veczero}{{\boldsymbol{0}}}
\newcommand{\lb}{\mathrm{b}}
\newcommand{\Jlb}{J^{E_8,\lb}_{*}}
\newcommand{\dlb}{d^\lb}
\newcommand{\bx}[1]{\makebox[1.1em][c]{#1}}
\begin{document}

%%%%%%%%%%%%%%%%%%%%%%%%%%%%%%%%%%%%%%%%%%%%%%%%%%%%%%%%%%%%%%%%%%%%%%%%
%%% Title page %%%

%%% styles for the title page %%%%%%%%%%%%%%%%%%%%%%%%%%%%%%%%%%%%%%%%%%
\def\papertitlepage{\baselineskip 3.5ex \thispagestyle{empty}}
\def\preprinumber#1#2{\hfill
\begin{minipage}{1.22in}
#1 \par\noindent #2
\end{minipage}}

%%%%%%%%%%%%%%%%%%%%%%%%%%%%%%%%%%%%%%%%%%%%%%%%%%%%%%%%%%%%%%%%%%%%%%%%
%
\papertitlepage
\setcounter{page}{0}
\preprinumber{arXiv:2201.06895}{}
%\preprinumber{}{}
\vskip 2ex
\vfill
\begin{center}
{\large\bf\mathversion{bold}
Algebraic construction of Weyl invariant $E_8$ Jacobi forms
}
\end{center}
\vfill
\baselineskip=3.5ex
\begin{center}
Kazuhiro Sakai\\

{\small
\vskip 6ex
{\it Institute of Physics, Meiji Gakuin University,
Yokohama 244-8539, Japan}\\
\vskip 1ex
{\tt kzhrsakai\atmark gmail.com}

}
\end{center}
\vfill
\baselineskip=3.5ex
\begin{center} {\bf Abstract} \end{center}

We study the ring of Weyl invariant $E_8$ weak Jacobi forms.
Wang recently proved that the ring is not a polynomial algebra.
We consider a polynomial algebra which contains the ring as a subset
and clarify the necessary and sufficient condition for
an element of the polynomial algebra
to be a Weyl invariant $E_8$ weak Jacobi form.
This serves as a new algorithm for constructing all the Jacobi forms
of given weight and index. The algorithm is purely algebraic
and does not require Fourier expansion.
Using this algorithm we determine the generators of the free module of
Weyl invariant $E_8$ weak Jacobi forms of given index $m$
for $m\le 20$. We also determine the lowest weight generators
of the free module of index $m$ for $m\le 28$.
Our results support the lower bound conjecture of Sun and Wang
and prove explicitly that there exist generators of
the ring of Weyl invariant $E_8$ weak Jacobi forms
of weight $-4m$ and index $m$ for all $12\le m \le 28$.

\vfill
\noindent
January 2022

%%%%%%%%%%%%%%%%%%%%%%%%%%%%%%%%%%%%%%%%%%%%%%%%%%%%%%%%%%%%%%%%%%%%%%%%
%%% Main text %%%

%%% styles for the main text %%%%%%%%%%%%%%%%%%%%%%%%%%%%%%%%%%%%%%%%%%%
\setcounter{page}{0}
\newpage
\renewcommand{\thefootnote}{\arabic{footnote}}
\setcounter{footnote}{0}
\setcounter{section}{0}
\baselineskip = 3.5ex
\pagestyle{plain}

%%%%%%%%%%%%%%%%%%%%%%%%%%%%%%%%%%%%%%%%%%%%%%%%%%%%%%%%%%%%%%%%%%%%%%%%
\section{Introduction and summary}\label{sec:intro}
%%%%%%%%%%%%%%%%%%%%%%%%%%%%%%%%%%%%%%%%%%%%%%%%%%%%%%%%%%%%%%%%%%%%%%%%

Jacobi forms are functions which have the characteristics of
both an elliptic function and a modular form.
Eichler and Zagier initiated the systematic study of Jacobi forms
\cite{EichlerZagier}.
Wirthm\"uller investigated a generalization
of Jacobi forms associated with root systems \cite{Wirthmuller}.
Let $R$ be an irreducible root system of rank $r$
and $W(R)$ the Weyl group of $R$.
A Weyl invariant $R$ Jacobi form, or
a $W(R)$-invariant Jacobi form is a holomorphic function
of variables $\tau\in\bbH$ and $\vecz\in \bbC^r$
which is quasi-periodic in $\vecz$,
invariant under $W(R)$ acting on $\vecz$
and satisfies the modular transformation law.

$W(R)$-invariant Jacobi forms of integral weight and integral index
form a bigraded algebra $J^R_{*,*}$ over the ring $M_*$ of
$\grp{SL}_2(\bbZ)$ modular forms.
Wirthm\"uller proved that $J^R_{*,*}$
is a polynomial algebra for any irreducible root system $R$
except $E_8$ \cite{Wirthmuller}.
Generators of the polynomial ring have been explicitly constructed
for all irreducible root systems $R$ other than $E_8$
\cite{Wirthmuller,Satake:1993cp,BertolaThesis,Bertola1,Sakai:2017ihc,
Adler:2019ysr,Adler:2021}.
On the other hand, for $R=E_8$ the structure of the bigraded algebra
$J^{E_8}_{*,*}$ has not yet been fully understood.

The investigation of $W(E_8)$-invariant Jacobi forms was first
developed in physics. The systematic construction of 
$W(E_8)$-invariant Jacobi forms was initiated
in \cite{Minahan:1998vr} in the study of E-strings.
In \cite{Eguchi:2002fc},
nine $W(E_8)$-invariant meromorphic Jacobi forms
$a_2,a_3,a_4,b_1,b_2,b_3,b_4,b_5,b_6$ were constructed
as the coefficients of the Seiberg--Witten curve
for the E-string theory. Among many applications,
the curve can be used in particular
to generate the topological string partition function
of the Calabi--Yau threefolds known as
the local $\frac{1}{2}$K3 or the local rational elliptic surfaces
\cite{Sakai:2011xg}.
$a_i,b_j$ are meromorphic because they have poles
at the zero points of Eisenstein series $E_4(\tau)$.
In \cite{Sakai:2011xg}, $a_i,b_j$ were expressed in terms of
nine simple $W(E_8)$-invariant holomorphic Jacobi forms
$A_1,A_2,A_3,A_4,A_5,B_2,B_3,B_4,B_6$
and $E_4,E_6$.

In physics, it is deduced by symmetry consideration
\cite{Minahan:1998vr} that
the elliptic genus $Z_n$ of $n$ E-strings
multiplied by $\Delta^{n/2}$ ($\Delta=(E_4^3-E_6^2)/1728$)
has to be a $W(E_8)$-invariant quasi Jacobi form
(i.e.~involving $E_2(\tau)$) of weight $6n-2$ and index $n$.
Moreover, a prescription to express $\Delta^{n/2}Z_n$
as a polynomial of $a_i,b_j,E_2,E_4,E_6$ for any $n$
was given in \cite{Eguchi:2002fc}.
Therefore, a natural guess from physics is that
$a_i,b_j$
--- though they are meromorphic and
are not exactly Jacobi forms themselves ---
play the role of ``generators'' of
$W(E_8)$-invariant Jacobi forms,
in the sense that any $W(E_8)$-invariant Jacobi form
is written as a polynomial of $a_i,b_j,E_4,E_6$.
This was explicitly conjectured in \cite{Sakai:2017ihc}.
A proof of this conjecture (Theorem~\ref{thm:subset})
is one of the main results of this paper.

In \cite{Wang:2018fil}, 
Wang initiated the detailed investigation
of the bigraded algebra $J^{E_8}_{*,*}$
of $W(E_8)$-invariant Jacobi forms.
In particular, it was explicitly proved that $J^{E_8}_{*,*}$
is not a polynomial algebra over $M_*$.
In \cite{Wang:2018fil} and other earlier works
such as \cite{Huang:2013yta,DelZotto:2017mee},
$W(E_8)$-invariant Jacobi forms are written
in terms of the above mentioned
$A_i,B_j$ introduced in \cite{Sakai:2011xg}.
It is natural to use $A_i,B_j$ rather than $a_i,b_j$
as the building blocks because 
$A_i,B_j$ are simpler and they themselves are Jacobi forms.
On the other hand,
it was pointed out in \cite{Huang:2013yta} that 
certain $W(E_8)$-invariant Jacobi forms
cannot be expressed as polynomials of $A_i,B_j$
unless further multiplied by $E_4$.
In \cite{DelZotto:2017mee} it was found that
the $W(E_8)$-invariant holomorphic Jacobi form
of weight $16$ and index $5$ given by
\begin{align}
P_{16,5}
 =864A_1^3A_2+3825A_1B_2^2-770E_6A_3B_2-840E_6A_2B_3+60E_6A_1B_4
  +21E_6^2A_5
\label{eq:P16c5}
\end{align}
vanishes at the zero points of $E_4$.
The authors of \cite{DelZotto:2017mee} then conjectured that
the quotient $P_{16,5}/E_4$ is also a $W(E_8)$-invariant
holomorphic Jacobi form.
They further conjectured that any polynomial of $E_6,A_i,B_j$
which vanishes at the zero points of $E_4$
is divisible by $P_{16,5}$.
These conjectures were proved recently
by Sun and Wang \cite{Sun:2021ije}.
Moreover, Sun and Wang proved that
every $W(E_8)$-invariant Jacobi form of index $t$ can be expressed
uniquely as
\begin{align}
\frac{\sum_{j=0}^{t_1}\left({P_{16,5}}/{E_4}\right)^{t_1-j}P_j}
     {\Delta^{N_t}},
\label{eq:SunWangform0}
\end{align}
where $t_1,N_t$ are certain non-negative integers determined by $t$,
$\{P_j\}_{j=0}^{t_1-1}$ are polynomials of $E_6,A_i,B_j$,
and $P_{t_1}$ is a polynomial of $E_4,E_6,A_i,B_j$.

This theorem is a powerful tool to investigate
the structure of the bigraded algebra $J^{E_8}_{*,*}$
of $W(E_8)$-invariant Jacobi forms.
In fact, it is of practical use in constructing
$W(E_8)$-invariant Jacobi forms, in particular, of higher index:
For given index, the polynomial ansatz has a finite number of
coefficients, which are to be constrained so that the Fourier
expansion of the ansatz satisfies the condition of
being a Jacobi form.
The condition boils down to solving linear equations.
Sun and Wang determined the generators of the free module
of $W(E_8)$-invariant Jacobi forms of given index $m$ for $m\le 13$
\cite{Sun:2021ije}.
A technical drawback of this construction is that
the Fourier expansion
involves manipulation of Weyl orbits of $E_8$,
which gets more and more complicated
when one constructs Jacobi forms of higher index.

In this paper we propose another efficient method of constructing
$W(E_8)$-invariant Jacobi forms.
Our method is purely algebraic
and does not require Fourier expansion.
We consider the polynomial algebra $\cR$
generated by $a_i,b_j$ over $M_*=\bbC[E_4,E_6]$.
By our Theorem~\ref{thm:subset},
$\cR$ contains the bigraded algebra $J^{E_8}_{*,*}$
as a subset. In fact, any element of $\cR$ satisfies
almost all of the properties of $W(E_8)$-invariant Jacobi forms.
The only issue is the holomorphicity: some of the elements have poles
at the zero points of $E_4$,
which prevents them from being Jacobi forms.
Therefore, we have only to check whether
a given element of $\cR$ is holomorphic at these points or not.
This can be done easily by using our Theorem~\ref{thm:E8cond}:
It turns out that the necessary and sufficient condition for
an element of $\cR$ to be a $W(E_8)$-invariant Jacobi form
is that it can be written as a polynomial of
$\Delta^{-1},E_4,E_6,A_i,B_j,(P_{16,5}/E_4)$.
This gives an efficient algorithm for
constructing all the Jacobi forms of given weight and index.

Using this algorithm
we will determine the generators of the free module $J^{E_8}_{*,m}$
of $W(E_8)$-invariant weak Jacobi forms of index $m$ for $m\le 20$.
We will also determine the lowest weight generators
of $J^{E_8}_{*,m}$ for $m\le 28$. Our results confirm
the lower bound conjecture of Sun and Wang \cite{Sun:2021ije}
that the weight of non-zero
$W(E_8)$-invariant weak Jacobi forms of index $m$ is not
less than $-4m$.
Moreover, our results prove explicitly that 
the generators of the ring $J^{E_8}_{*,*}$ must include those
of weight $-4m$ and index $m$ for all $12\le m \le 28$.
This means that the structure of $J^{E_8}_{*,*}$
is highly complicated,
as already recognized in \cite{Sun:2021ije},
but perhaps more highly than expected.

The paper is organized as follows.
In section~\ref{sec:jacobi} we collect known results
about $W(E_8)$-invariant Jacobi forms.
In section~\ref{sec:proof} we prove our main theorems.
In section~\ref{sec:results} we explain our algorithm
in detail and present the results
about the generators of $J^{E_8}_{*,m}$ and $J^{E_8}_{*,*}$.
There are two appendices on conventions.

%%%%%%%%%%%%%%%%%%%%%%%%%%%%%%%%%%%%%%%%%%%%%%%%%%%%%%%%%%%%%%%%%%%%%%%%
\section{Preliminaries}\label{sec:jacobi}
%%%%%%%%%%%%%%%%%%%%%%%%%%%%%%%%%%%%%%%%%%%%%%%%%%%%%%%%%%%%%%%%%%%%%%%%

%%%
\subsection{Definitions}
%%%

Let $\bbH=\{\tau\in\bbC|\mathrm{Im}\,\tau>0\}$ be the upper half plane
and set $q=e^{2\pi\ri\tau}$.
Let $E_4(\tau),E_6(\tau)$ denote the Eisenstein series
of weight $4,6$ respectively (see Appendix~\ref{app:functions}).
It is well known that $E_4,E_6$ generate
the graded algebra $M_*$ of all $\grp{SL}_2(\bbZ)$ modular forms:
\begin{align}
M_*=\bbC[E_4,E_6].
\end{align}
We will also use the cusp form
\begin{align}
\Delta=\eta^{24}=\frac{1}{1728}\left(E_4^3-E_6^2\right)
\label{eq:Delta}
\end{align}
frequently, where $\eta(\tau)$ is the Dedekind eta function.

\begin{dfn}[Weyl invariant Jacobi form]\label{def:Jacobi}
Let $R$ be an irreducible root system of rank $r$
and $W(R)$ the Weyl group of $R$.
Let $L_R$ be the root lattice of $R$
and $L_R^*$ the dual lattice of $L_R$.
When $L_R$ is odd, we rescale its bilinear form by 2
so that it becomes an even lattice.
A holomorphic function $\varphi_{k,m}:\bbH\times\bbC^r\to \bbC$
is called a $W(R)$-invariant weak Jacobi form
of weight $k$ and index $m$ ($k\in\bbZ,\ m\in\bbZ_{\ge 0}$) 
if it satisfies
the following properties \cite{EichlerZagier, Wirthmuller}:
\renewcommand{\theenumi}{\roman{enumi}}
\renewcommand{\labelenumi}{(\theenumi)}
\begin{enumerate}
\item Weyl invariance:
\begin{align}
\varphi_{k,m}(\tau,w(\vecz)) = \varphi_{k,m}(\tau,\vecz),\qquad
w\in W(R).
\label{Weylinv}
\end{align}

\item Quasi-periodicity:
\begin{align}
\varphi_{k,m}(\tau,\vecz+\tau\bvec{\alpha}+\bvec{\beta})
=e^{-m \pi \ri (\tau\bvec{\alpha}^2+2\vecz\cdot\bvec{\alpha})}
\varphi_{k,m}(\tau,\vecz),\qquad
\bvec{\alpha},\bvec{\beta}\in L_R.
\end{align}

\item Modular transformation law:
\begin{align}\label{Modularprop}
&
\varphi_{k,m}\left(
\frac{a\tau+b}{c\tau+d}\,,\frac{\vecz}{c\tau+d}\right)
=(c\tau+d)^k\exp\left(m\pi \ri\frac{c}{c\tau+d}\,\vecz^2\right)
\varphi_{k,m}(\tau,\vecz),\\[1ex]
&
\Bigl(\begin{array}{cc}a&b\\ c&d\end{array}\Bigr)
\in \grp{SL}_2(\bbZ).\nn
\end{align}

\item $\varphi_{k,m}(\tau,\vecz)$ admits
a Fourier expansion of the form
\begin{align}\label{Fourierform}
\varphi_{k,m}(\tau,\vecz)
=\sum_{n=0}^\infty
 \sum_{\vecw\in L_R^*}
 c(n,\vecw)e^{2\pi \ri\vecw\cdot\vecz}q^n.
\end{align}
\end{enumerate}
If $\varphi_{k,m}(\tau,\vecz)$ further satisfies the condition
that the coefficients $c(n,\vecw)$
of the Fourier expansion (\ref{Fourierform})
vanish unless $\vecw^2\le 2mn$,
it is called a $W(R)$-invariant holomorphic Jacobi form.
In this paper a Jacobi form means a weak Jacobi form
unless otherwise specified.
\end{dfn}
\begin{rem*}
In this paper we also introduce the notion of
meromorphic Jacobi forms:
we call function $\psi(\tau,\vecz)$ a meromorphic Jacobi form
if $\psi$ itself is not a Jacobi form but there exists
a modular form $f\in M_*$ such that
$f\psi$ is a Jacobi form.
\end{rem*}

Let the vector space of $W(R)$-invariant weak Jacobi forms
of weight $k$ and index $m$ be denoted by
\begin{align}
J^{R}_{k,m}.
\end{align}
$J^{R}_{k,m}$ constitute the free module
\begin{align}
J^{R}_{*,m}
 :=\bigoplus_{k\in\bbZ}J^{R}_{k,m}
\end{align}
and the bigraded algebra
\begin{align}
J^{R}_{*,*}
 :=\bigoplus_{m=0}^{\infty}J^{R}_{*,m}
\end{align}
over $M_*$.
In this paper we will study
$J^{R}_{*,m}$ and $J^{R}_{*,*}$ for $R=E_8$.

%%%
\subsection{$W(E_8)$-invariant Jacobi forms}
%%%

Nine independent $W(E_8)$-invariant holomorphic Jacobi forms
were constructed in \cite{Sakai:2011xg}.
The basic building block is
the theta function of the root lattice $L_{E_8}$:
\begin{align}
\begin{aligned}
\Theta_{E_8}(\tau,\vecz)
 :=\sum_{\vecw\in L_{E_8}}
   \exp\left(\pi \ri\tau\vecw^2
   +2\pi \ri\vecz\cdot\vecw\right)
 =\frac{1}{2}\sum_{k=1}^4\prod_{j=1}^8\varth_k(z_j,\tau).
\end{aligned}
\end{align}
$\varth_k(z,\tau)$ are the Jacobi theta functions
(see Appendix~\ref{app:functions}).
The nine $W(E_8)$-invariant holomorphic Jacobi forms
are given by \cite[Appendix A]{Sakai:2011xg}
\begin{align}
A_1(\tau,\vecz)&=\Theta_{E_8}(\tau,\vecz),\qquad
A_4(\tau,\vecz)=A_1(\tau,2\vecz),\nn\\
A_m(\tau,\vecz)&=\tfrac{m^3}{m^3+1}\left(
  A_1(m\tau,m\vecz)
  +\tfrac{1}{m^4}\mbox{$\sum_{k=0}^{m-1}$}
  A_1(\tfrac{\tau+k}{m},\vecz)
 \right),\qquad m=2,3,5,\nn\\
B_2(\tau,\vecz)&=\tfrac{32}{5}\left(
 e_1(\tau)A_1(2\tau,2\vecz)
 +\tfrac{1}{2^4}e_3(\tau)A_1(\tfrac{\tau}{2},\vecz)
 +\tfrac{1}{2^4}e_2(\tau)A_1(\tfrac{\tau+1}{2},\vecz)\right),\nn\\
B_3(\tau,\vecz)&=\tfrac{81}{80}\left(
 h_0(\tau)^2A_1(3\tau,3\vecz)
  -\tfrac{1}{3^5}\mbox{$\sum_{k=0}^{2}$}h_0(\tfrac{\tau+k}{3})^2
  A_1(\tfrac{\tau+k}{3},\vecz)\right),\nn\\
B_4(\tau,\vecz)&=\tfrac{16}{15}\left(
 \varth_4(2\tau)^4A_1(4\tau,4\vecz)
 -\tfrac{1}{2^4}\varth_4(2\tau)^4
  A_1(\tau+\tfrac{1}{2},2\vecz)\right.\nn\\
&\hspace{2em}
 \left.
 -\tfrac{1}{2^2\cdot 4^4}\mbox{$\sum_{k=0}^{3}$}
  \varth_2(\tfrac{\tau+k}{2})^4
  A_1(\tfrac{\tau+k}{4},\vecz)\right),\nn\\
B_6(\tau,\vecz)&=\tfrac{9}{10}\left(
  h_0(\tau)^2A_1(6\tau,6\vecz)
 +\tfrac{1}{2^4}\mbox{$\sum_{k=0}^{1}$}
  h_0(\tau+k)^2A_1(\tfrac{3\tau+3k}{2},3\vecz)\right.\nn\\
&\hspace{2em}\left.
 -\tfrac{1}{3\cdot 3^4}\mbox{$\sum_{k=0}^{2}$}
  h_0(\tfrac{\tau+k}{3})^2A_1(\tfrac{2\tau+2k}{3},2\vecz)\right.\nn\\
&\hspace{2em}\left.
 -\tfrac{1}{3\cdot 6^4}\mbox{$\sum_{k=0}^{5}$}
  h_0(\tfrac{\tau+k}{3})^2A_1(\tfrac{\tau+k}{6},\vecz)\right).
\label{E8AB}
\end{align}
Here, functions $e_j(\tau)$ and $h_0(\tau)$ are defined as
\begin{align}
\begin{aligned}
e_1(\tau)&:=
 \tfrac{1}{12}\left(\varth_3(\tau)^4+\varth_4(\tau)^4\right),\\
e_2(\tau)&:=
 \tfrac{1}{12}\left(\varth_2(\tau)^4-\varth_4(\tau)^4\right),\\
e_3(\tau)&:=
 \tfrac{1}{12}\left(-\varth_2(\tau)^4-\varth_3(\tau)^4\right),\\
h_0(\tau)&:=
 \varth_3(2\tau)\varth_3(6\tau)+\varth_2(2\tau)\varth_2(6\tau).
\end{aligned}
\end{align}
$A_m,B_m$ are of weight $4,6$ respectively and of index $m$.
If we set $\vecz=\veczero$,
these Jacobi forms reduce to the Eisenstein series $E_4,E_6$:
\begin{align}
A_m(\tau,\veczero)=E_4(\tau),\qquad
B_m(\tau,\veczero)=E_6(\tau).
\end{align}

Based on the conjectures of \cite{DelZotto:2017mee},
Sun and Wang proved the following theorem,
which will be used in the proof of our main theorems.
\begin{thm}[{Sun and Wang \cite[Theorem 1.1]{Sun:2021ije}}]
\label{thm:SunWang}
\renewcommand{\theenumi}{\arabic{enumi}}
\renewcommand{\labelenumi}{$(\theenumi)$}
~\\[-4ex]
\begin{enumerate}
\item $P_{16,5}/E_4$ with $P_{16,5}$ given by \eqref{eq:P16c5}
is a $W(E_8)$-invariant holomorphic Jacobi form of
weight $12$ and index $5$.
\item For any
$W(E_8)$-invariant Jacobi form $P\in\bbC[E_6,\{A_i\},\{B_j\}]$,
if $P/E_4$ is holomorphic on $\bbH\times\bbC^8$, then
\begin{align}
\frac{P}{P_{16,5}}\in
\bbC[E_6,\{A_i\},\{B_j\}].
\end{align}
\item Every $W(E_8)$-invariant Jacobi form
of index $t$ can be expressed uniquely as
\begin{align}
%\phi_t=
\frac{\sum_{j=0}^{t_1}\left({P_{16,5}}/{E_4}\right)^{t_1-j}P_j}
     {\Delta^{N_t}},
\label{eq:SunWangform}
\end{align}
where $t_1,N_t\in\bbZ_{\ge 0}$ are such that
$t_1=[t/5]$,
$N_t-5t_0=0,0,1,2,3,3$ for $t-6t_0=0,1,2,3,4,5$
respectively with $t_0=[t/6]$,
$[x]$ is the integer part of $x$ and
\begin{align}
\{P_j\}_{j=0}^{t_1-1}&\in \bbC[E_6,\{A_i\},\{B_j\}],\qquad
P_{t_1}\in \bbC[E_4,E_6,\{A_i\},\{B_j\}].
\end{align}
\end{enumerate}
\end{thm}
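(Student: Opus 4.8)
The three parts are naturally organized around a single tool: restriction to the elliptic point $\tau=\rho:=e^{2\pi\ri/3}$, the unique zero of $E_4$ in the fundamental domain, which is a simple zero and has a cyclic order-three stabilizer in $\grp{SL}_2(\bbZ)$ generated by $\gamma=\bigl(\begin{smallmatrix}0&-1\\1&1\end{smallmatrix}\bigr)$. Since $E_4$ vanishes to first order along $\{\rho\}\times\bbC^8$ and nowhere else modulo $\grp{SL}_2(\bbZ)$, for any $P\in\bbC[E_6,\{A_i\},\{B_j\}]$ the quotient $P/E_4$ is holomorphic if and only if $P(\rho,\vecz)\equiv 0$ as a function of $\vecz$; by the modular law \eqref{Modularprop}, vanishing at the single point $\rho$ propagates to every zero of $E_4$. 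Thus I would first set up the evaluation homomorphism $\Phi:\bbC[E_6,\{A_i\},\{B_j\}]\to\mathcal S$, $P\mapsto P(\rho,\cdot)$, into the ring $\mathcal S$ of functions of $\vecz$ obtained by restricting $W(E_8)$-invariant Jacobi forms to $\tau=\rho$, and observe that parts (1) and (2) together amount to the single statement that $\ker\Phi$ is the principal ideal $(P_{16,5})$.

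To prove $\ker\Phi=(P_{16,5})$ I would pass to the theta decomposition. A $W(E_8)$-invariant Jacobi form of index $m$ is $\varphi=\sum_\mu h_\mu(\tau)\Theta_\mu(\tau,\vecz)$, where $\mu$ runs over the finitely many Weyl orbits in $L_{E_8}^*/mL_{E_8}$, the $\Theta_\mu$ are the associated index-$m$ theta functions, and $(h_\mu)$ is a vector-valued modular form of weight $k-4$ for the corresponding Weil representation $\rho_m$. At $\tau=\rho$ the order-three element $\gamma$ acts on the vector $\bigl(\Theta_\mu(\rho,\cdot)\bigr)$ through the finite matrix $\rho_m(\gamma)$, and its eigenspace decomposition imposes exactly the linear relations among the $\Theta_\mu(\rho,\cdot)$ that make $\mathcal S$ smaller than the full theta space. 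The plan is then to compute the bigraded Hilbert series of $\mathcal S$ (graded by index $m$, with the weight recorded through the character of $\gamma$) and of the source $\bbC[E_6,\{A_i\},\{B_j\}]$, using $\Theta_{E_8}(\rho,\veczero)=E_4(\rho)=0$ together with $E_6(\rho)\neq0$ to fix the low-order data, and to check that the two series differ exactly by the factor accounting for a free rank-one ideal generated in weight $16$ and index $5$. Concretely, matching $\dim(\ker\Phi)_{k,m}=\dim\bbC[E_6,\{A_i\},\{B_j\}]_{k-16,\,m-5}$ in every bidegree shows $\ker\Phi$ is freely generated by a single element of weight $16$ and index $5$; identifying that generator with \eqref{eq:P16c5} (for which one verifies directly, e.g.\ at $\vecz=\veczero$, where $P_{16,5}|_{\vecz=\veczero}=E_4\bigl(864E_4^3+2296E_6^2\bigr)$ manifestly vanishes at $\rho$) completes both (1) and (2).

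For part (3) I would introduce $Q:=P_{16,5}/E_4$, a holomorphic Jacobi form of weight $12$ and index $5$ by part (1), and exploit the single relation $E_4\,Q=P_{16,5}\in\bbC[E_6,\{A_i\},\{B_j\}]$. The first task is a generation input: after inverting $\Delta$, every $W(E_8)$-invariant Jacobi form lies in $\bbC[E_4,E_6,\{A_i\},\{B_j\},Q]$. I would establish this by induction on the index, reducing a given form against the forms $E_4,E_6,\{A_i\},\{B_j\}$; the only obstruction to membership in $\bbC[E_4,E_6,\{A_i\},\{B_j\}]$ is the need to divide by $E_4$, and part (2) converts each such division into the extraction of one factor $Q$ at the cost of lowering the index by $5$. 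Rewriting every monomial containing both $E_4$ and $Q$ via $E_4Q=P_{16,5}$ drives each term into one of two normal forms — those free of $E_4$, whose coefficients collect into the $P_j\in\bbC[E_6,\{A_i\},\{B_j\}]$ with $j<t_1$, and the single $E_4$-allowed remainder $P_{t_1}$ — while the index budget caps the power of $Q$ at $t_1=[t/5]$ and bookkeeping of the minimal available weight at each index residue yields the stated $N_t$ in \eqref{eq:SunWangform}. Uniqueness then follows by evaluating a vanishing numerator at $\tau=\rho$ and peeling off the $P_j$ one at a time using $\ker\Phi=(P_{16,5})$ from part (2).

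The main obstacle is the Hilbert-series computation underlying $\ker\Phi=(P_{16,5})$: one must pin down the ring $\mathcal S$ of Jacobi forms restricted to the elliptic point precisely enough — via the representation $\rho_m(\gamma)$ and its eigenvalues — to rule out any kernel element of lower weight or index than $P_{16,5}$ and to confirm that the kernel is principal rather than merely containing $(P_{16,5})$. Everything else (the generation and normal-form bookkeeping of part (3), and the propagation from $\rho$ to all zeros of $E_4$) is comparatively routine once this representation-theoretic count is in hand.
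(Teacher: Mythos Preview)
The paper does not prove this theorem. Theorem~\ref{thm:SunWang} is stated with attribution to Sun and Wang \cite[Theorem 1.1]{Sun:2021ije} and is used as a black box in the proofs of Theorems~\ref{thm:subset} and~\ref{thm:E8cond}; no argument for it appears anywhere in the present paper. Consequently there is no ``paper's own proof'' against which to compare your proposal.

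That said, a brief remark on your sketch: the strategy of reducing (1) and (2) to the computation of $\ker\Phi$ for the evaluation map at $\tau=\rho$ is sound and is indeed the mechanism behind the Sun--Wang result. Your proposal, however, is really an outline rather than a proof. The step you flag as the ``main obstacle'' --- establishing that $\ker\Phi$ is principal by a Hilbert-series match between $\bbC[E_6,\{A_i\},\{B_j\}]$ and the restricted ring $\mathcal S$ --- is the entire content of part (2), and you have not actually carried it out: you have not identified $\mathcal S$, computed the eigenspace decomposition of $\rho_m(\gamma)$, or verified the dimension identity $\dim(\ker\Phi)_{k,m}=\dim\bbC[E_6,\{A_i\},\{B_j\}]_{k-16,m-5}$ in any bidegree. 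For part (3), your ``generation input'' that every Jacobi form lies in $\bbC[E_4,E_6,\{A_i\},\{B_j\},Q]$ after inverting $\Delta$ is essentially Wang's earlier structure theorem together with the division step from (2); the induction you describe is plausible but the bookkeeping that produces the specific exponent $N_t$ is not supplied. So the proposal identifies the right ingredients but leaves the substantive computations undone.
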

%

%%%
\subsection{$W(E_8)$-invariant meromorphic Jacobi forms}
%%%

In \cite{Eguchi:2002fc} nine meromorphic Jacobi forms
$\{a_i\}_{i=2}^4,\,\{b_j\}_{j=1}^6$ were explicitly constructed.
Later they were expressed in terms of the above $A_i,B_j$
as \cite[Appendix A]{Sakai:2011xg}
\begin{align}
a_2&=
  \frac{6}{E_4\Delta}\Bigl(-E_4A_2+A_1^2\Bigr),\nn\\
%\noalign{\break}
a_3&=\frac{1}{9E_4^2\Delta^2}
  \Bigl(-7E_4^2E_6A_3-20E_4^3B_3
       -9E_4E_6A_1A_2+30E_4^2A_1B_2+6E_6A_1^3\Bigr),\nn\\
%\noalign{\break}
a_4&=\frac{1}{864E_4^3\Delta^3}
  \Bigl((E_4^6-E_4^3E_6^2)A_4+(56E_4^5-56E_4^2E_6^2)A_1A_3
  -27E_4^5A_2^2\nn\\
&\hspace{1em}
  -90E_4^3E_6A_2B_2-75E_4^4B_2^2+(180E_4^4-36E_4E_6^2)A_1^2A_2\nn\\
&\hspace{1em}
  +240E_4^2E_6A_1^2B_2
  +(-210E_4^3+18E_6^2)A_1^4\Bigr),\nn\\
%\noalign{\break}
b_1&=-\frac{4}{E_4}A_1,\qquad
b_2=
  \frac{5}{6E_4^2\Delta}\Bigl(E_4^2B_2-E_6A_1^2\Bigr),\nn\\
%\noalign{\break}
b_3&=\frac{1}{108E_4^3\Delta^2}
  \Bigl(-7E_4^5A_3-20E_4^3E_6B_3\nn\\
&\hspace{1em}
  -9E_4^4A_1A_2+30E_4^2E_6A_1B_2+(16E_4^3-10E_6^2)A_1^3\Bigr),\nn\\
%\noalign{\break}
b_4&=\frac{1}{1728E_4^4\Delta^3}
  \Bigl((-5E_4^7+5E_4^4E_6^2)B_4
  +(80E_4^6-80E_4^3E_6^2)A_1B_3\nn\\
&\hspace{1em}
  +9E_4^5E_6A_2^2+30E_4^6A_2B_2+25E_4^4E_6B_2^2
  -48E_4^4E_6A_1^2A_2\nn\\
&\hspace{1em}
  +(-140E_4^5+60E_4^2E_6^2)A_1^2B_2
  +(74E_4^3E_6-10E_6^3)A_1^4\Bigr),\nn\\
\noalign{\break}
b_5&=\frac{1}{72E_4^5\Delta^3}\Bigl(
  (-21E_4^7+21E_4^4E_6^2)A_5-294E_4^6A_2A_3-770E_4^4E_6B_2A_3\nn\\
&\hspace{1em}
  -840E_4^4E_6A_2B_3-2200E_4^5B_2B_3+168E_4^5A_1^2A_3
  +480E_4^3E_6A_1^2B_3\nn\\
&\hspace{1em}
  -621E_4^5A_1A_2^2+3525E_4^4A_1B_2^2
  +1224E_4^4A_1^3A_2-240E_4^2E_6A_1^3B_2\nn\\
&\hspace{1em}
  +(-456E_4^3+24E_6^2)A_1^5
\Bigr),\nn\\
%\noalign{\break}
b_6&=\frac{1}{13436928E_4^6\Delta^5}\Bigl(
  (-20E_4^{12}+40E_4^9E_6^2-20E_4^6E_6^4)B_6\nn\\
&\hspace{1em}
 +(-189E_4^{10}E_6+378E_4^7E_6^3-189E_4^4E_6^5)A_1A_5\nn\\
&\hspace{1em}
 +(-9E_4^{10}E_6+9E_4^7E_6^3)A_2A_4
 +(-15E_4^{11}+15E_4^8E_6^2)B_2A_4\nn\\
&\hspace{1em}
 +(-180E_4^{11}+180E_4^8E_6^2)A_2B_4
 +(-300E_4^9E_6+300E_4^6E_6^3)B_2B_4\nn\\
&\hspace{1em}
 +(22E_4^9E_6-22E_4^6E_6^3)A_1^2A_4
 +(150E_4^{10}+120E_4^7E_6^2-270E_4^4E_6^4)A_1^2B_4\nn\\
&\hspace{1em}
 +(196E_4^{10}E_6-196E_4^7E_6^3)A_3^2
 +(1120E_4^{11}-1120E_4^8E_6^2)A_3B_3\nn\\
&\hspace{1em}
 +(1600E_4^9E_6-1600E_4^6E_6^3)B_3^2
 +(-2982E_4^9E_6+2982E_4^6E_6^3)A_1A_2A_3\nn\\
&\hspace{1em}
 +(-2520E_4^{10}-4410E_4^7E_6^2+6930E_4^4E_6^4)A_1B_2A_3\nn\\
&\hspace{1em}
 +(3360E_4^{10}-10920E_4^7E_6^2+7560E_4^4E_6^4)A_1A_2B_3\nn\\
&\hspace{1em}
 +(-19800E_4^8E_6+19800E_4^5E_6^3)A_1B_2B_3
 +(2016E_4^8E_6-2016E_4^5E_6^3)A_1^3A_3\nn\\
&\hspace{1em}
 +(-5920E_4^9+7360E_4^6E_6^2-1440E_4^3E_6^4)A_1^3B_3
 +(405E_4^9E_6+162E_4^6E_6^3)A_2^3\nn\\
&\hspace{1em}
 +(1215E_4^{10}+1620E_4^7E_6^2)A_2^2B_2
 +4725E_4^8E_6A_2B_2^2\nn\\
&\hspace{1em}
 +(1125E_4^9+1500E_4^6E_6^2)B_2^3
 +(-9477E_4^8E_6+5103E_4^5E_6^3)A_1^2A_2^2\nn\\
&\hspace{1em}
 +(-9180E_4^9-5400E_4^6E_6^2)A_1^2A_2B_2
 +(20925E_4^7E_6-33075E_4^4E_6^3)A_1^2B_2^2\nn\\
&\hspace{1em}
 +(20304E_4^7E_6-9072E_4^4E_6^3)A_1^4A_2\nn\\
&\hspace{1em}
 +(12780E_4^8+5400E_4^5E_6^2+540E_4^2E_6^4)A_1^4B_2\nn\\
&\hspace{1em}
 +(-11076E_4^6E_6+1512E_4^3E_6^3-36E_6^5)A_1^6
\Bigr).
\label{eq:abinABfull}
\end{align}
One can easily invert these relations and express
$A_i,B_j$ in terms of $a_k,b_l$ as
\begin{align}
A_1&=
 -\frac{E_4b_1}{4},\qquad
A_2=
 \frac{3E_4b_1^2-8\Delta a_2}{48},\nn\\
A_3&=
 \frac{-21E_4b_1^3-12\Delta E_4b_3+\Delta E_6a_3-72\Delta a_2b_1}
      {1344},\nn\\
A_4&=
 \frac{1}{2304}(
  \Delta E_4^2a_2^2+9E_4b_1^4-288\Delta E_4b_1b_3+144\Delta
  E_4b_2^2-24\Delta E_6a_2b_2+24\Delta E_6a_3b_1\nn\\
&\hspace{2em}
  +1296\Delta a_2b_1^2+1152\Delta^2a_4),\nn\\
A_5&=
 \frac{1}{64512}(3\Delta E_4^2a_2^2b_1-63E_4b_1^5+216\Delta E_4b_1^2b_3
  -144\Delta E_4b_1b_2^2-24\Delta E_6a_2b_1b_2\nn\\
&\hspace{2em}
  +110\Delta E_6a_3b_1^2-1200\Delta a_2b_1^3-128\Delta^2E_4b_5
  -1344\Delta^2a_2b_3+2112\Delta^2a_3b_2),\nn\\[1ex]
B_2&=
 \frac{5E_6b_1^2+96\Delta b_2}{80},\qquad
B_3=
 \frac{-\Delta E_4^2a_3-60E_6b_1^3+12\Delta E_6b_3-1728\Delta b_1b_2}
      {3840},\nn\\
B_4&=
 \frac{1}{34560}(-24\Delta E_4^2a_2b_2+36\Delta E_4^2a_3b_1
  +\Delta E_4E_6a_2^2+135E_6b_1^4-432\Delta E_6b_1b_3\nn\\
&\hspace{2em}
  +144\Delta E_6b_2^2+5184\Delta b_1^2b_2-6912\Delta^2b_4),\nn\\
B_6&=
 \frac{1}{552960}(-\Delta E_4^2E_6a_4b_1^2+72\Delta E_4^2a_2b_1^2b_2
  -216\Delta E_4^2a_3b_1^3-9\Delta E_4E_6a_2^2b_1^2\nn\\
&\hspace{2em}
  +135E_6b_1^6-96\Delta^2E_4^2a_2b_4+72\Delta^2E_4^2a_3b_3
  -144\Delta^2E_4^2a_4b_2+12\Delta^2E_4E_6a_2a_4\nn\\
&\hspace{2em}
  -3\Delta^2E_4E_6a_3^2-144\Delta^2E_4a_2^2b_2+288\Delta^2E_4a_2a_3b_1
  +12\Delta^2E_6a_2^3+12\Delta E_6^2b_1^2b_4\nn\\
&\hspace{2em}
  -216\Delta E_6b_1^3b_3+7776\Delta b_1^4b_2-2592\Delta^2E_6b_1b_5
  +1152\Delta^2E_6b_2b_4-432\Delta^2E_6b_3^2\nn\\
&\hspace{2em}
  +10368\Delta^2b_1^2b_4-124416\Delta^3b_6).
\label{eq:ABinab}
\end{align}
\begin{prop}
\label{prop:ABinab}
$A_i,B_j$ are polynomials of $E_4,E_6,a_k,b_l$.
\end{prop}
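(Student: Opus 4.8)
The statement is read off from the explicit inverse relations \eqref{eq:ABinab}. By \eqref{eq:Delta} we have $\Delta=(E_4^3-E_6^2)/1728\in\bbC[E_4,E_6]$, and each right-hand side of \eqref{eq:ABinab} is a polynomial in $E_4,E_6,\Delta,a_k,b_l$ with a purely numerical denominator; substituting for $\Delta$ exhibits every $A_i,B_j$ as an element of $\bbC[E_4,E_6,a_k,b_l]$. Hence the only point needing justification is that \eqref{eq:ABinab} itself holds, i.e.\ that inverting \eqref{eq:abinABfull} produces polynomials and not merely rational expressions.

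To obtain \eqref{eq:ABinab} I would invert \eqref{eq:abinABfull} using the index grading, under which $A_m,B_m,a_m,b_m$ all carry index $m$ and the relations are homogeneous. The inversion is then triangular in the index: at index $1$ one reads off $A_1=-E_4b_1/4$ from $b_1=-4A_1/E_4$, and at each higher index $m$ the index-$m$ generators present ($A_m$ and/or $B_m$) occur linearly in $a_m,b_m$, all remaining terms being products of strictly lower-index generators that are already polynomial in $E_4,E_6,a_k,b_l$ by the inductive hypothesis. Inspecting \eqref{eq:abinABfull}, the indices $m\in\{2,4,5,6\}$ each decouple into $1\times1$ solves (e.g.\ $a_2$ determines $A_2$ and $b_2$ determines $B_2$), and the only genuinely coupled case is $m=3$, where $a_3$ and $b_3$ each involve both $A_3$ and $B_3$.

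What makes the inversion stay inside the polynomial ring is the identity $E_4^3-E_6^2=1728\Delta$. After clearing denominators in \eqref{eq:abinABfull}, the coefficient of each new generator is a constant times a monomial $E_4^{a}\Delta^{p}$: for the $1\times1$ cases one finds factors such as $E_4^3(E_4^3-E_6^2)=1728E_4^3\Delta$ (for $A_4$) or $E_4^4(E_4^3-E_6^2)=1728E_4^4\Delta$ (for $A_5$), while at $m=3$ the coefficient determinant of $(A_3,B_3)$ equals $140E_4^5(E_6^2-E_4^3)=-140\cdot1728\,E_4^5\Delta$. Solving (by Cramer's rule in the coupled case) then requires the corresponding numerator to be divisible by exactly this $E_4^{a}\Delta^{p}$, after which only a numerical denominator remains.

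I expect this divisibility to be the real obstacle. The $E_4$-power divides the numerator by power counting, since each term in \eqref{eq:abinABfull} carries enough explicit powers of $E_4$, supplemented by the factor $E_4$ hidden in every occurrence of $A_1$; but the divisibility by $\Delta^{p}$ (equivalently, by $(E_4^3-E_6^2)^{p}$) is a genuine cancellation: the lower-index contributions must vanish modulo $E_4^3-E_6^2$ (concretely, one checks that terms like the two $A_1B_2$ contributions at $m=3$ cancel identically, and analogous cancellations occur at $m=5,6$). The cleanest way to certify all of this is simply to substitute \eqref{eq:abinABfull} into the right-hand sides of \eqref{eq:ABinab} and verify the finite list of identities; as a consistency check, any uncancelled $E_4^{-1}$ would give $A_i,B_j$ a pole on the zero locus of $E_4$ and any uncancelled $\Delta^{-1}$ would produce a $q^{-1}$ term, both of which are excluded because $A_i,B_j$ are holomorphic weak Jacobi forms.
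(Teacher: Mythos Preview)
Your proposal is correct and follows the paper's approach: the paper's proof is simply ``This is clear from \eqref{eq:Delta} and \eqref{eq:ABinab},'' which is exactly your first paragraph. The remaining paragraphs---explaining the triangular inversion by index, computing the $2\times2$ determinant at $m=3$, and noting the $E_4$ and $\Delta$ cancellations---go beyond what the paper supplies (it simply presents \eqref{eq:ABinab} as the result of the inversion without further comment), but your analysis is accurate and your suggested verification by back-substitution is precisely how one would confirm \eqref{eq:ABinab} in practice.
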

\begin{proof}
This is clear from \eqref{eq:Delta} and \eqref{eq:ABinab}.
\end{proof}

$a_m,b_m$ are of weight $4-6m,6-6m$ respectively and of index $m$.
They are meromorphic and are not exactly Jacobi forms.
Indeed, the lowest weight of non-zero $W(E_8)$-invariant
weak Jacobi forms of index $m$ for $m=1,2,3,4,5,6$
is $4$, $-4$, $-8$, $-16$, $-16$, $-24$ respectively
\cite{Wang:2018fil,Sun:2021ije} (or see \eqref{eq:Pwlists}),
but the weight of every $a_m,b_m$ is lower than this bound. 
More specifically, $a_m,b_m$ have poles
at the zero points of $E_4$.
On the other hand, they behave well at the cusp:
\begin{prop}
\label{prop:abqexp}
$a_i,b_j$ admit a Fourier expansion
of the form
\begin{align}
a_i(\tau,\vecz)=\sum_{n=0}^\infty a_i^{(n)}(\vecz)q^n,\qquad
b_j(\tau,\vecz)=\sum_{n=0}^\infty b_j^{(n)}(\vecz)q^n.
\end{align}
\end{prop}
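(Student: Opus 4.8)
The plan is to read off from \eqref{eq:abinABfull} that each of $a_i,b_j$ has the shape $P/(E_4^{p}\Delta^{s})$, where $P$ is a polynomial in $E_4,E_6$ and the holomorphic Jacobi forms $A_k,B_l$, and $p,s\ge 0$ are the exponents visible in the corresponding formula. Since $E_4=1+240q+\cdots$ and $E_6=1-504q+\cdots$ lie in $\bbC[[q]]$ with unit constant term, while each $A_k,B_l$ admits a Fourier expansion \eqref{Fourierform} with only non-negative integer powers of $q$ whose coefficients are finite exponential sums in $\vecz$ (hence honest holomorphic functions of $\vecz$), the numerator $P$ is itself a power series in $q$ with non-negative powers and $\vecz$-dependent coefficients. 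Thus the whole question concerns the behaviour at the cusp $q=0$; the poles of $a_i,b_j$ at the zeros of $E_4$ are irrelevant here because $E_4(\ri\infty)=1\neq 0$.

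Next I would isolate the only mechanism that can produce negative powers of $q$. The factor $E_4^{-p}$ is again a power series in $q$ with non-negative powers, since the constant term of $E_4$ is a unit. By \eqref{eq:Delta} we have $\Delta=\eta^{24}=q\prod_{n\ge1}(1-q^n)^{24}=q+O(q^2)$, so $\Delta^{-s}$ contributes exactly $q^{-s}$. It therefore suffices to show that the numerator $P$ vanishes at the cusp to order at least $s$, i.e. that the coefficients of $q^0,q^1,\dots,q^{s-1}$ in $P$ vanish identically in $\vecz$. This reduces the proposition to a statement about finitely many leading Fourier coefficients of each numerator, the worst case being $b_6$, where $s=5$.

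The crux is this order-of-vanishing check, and it is where the real work lies. As a first step I would record that the $q^0$ coefficient of every $A_m$ and every $B_m$ equals $1$ independently of $\vecz$: for the $A_m$ the only lattice vector contributing to $q^0$ in the theta series \eqref{E8AB} is $\vecw=\veczero$, while for the $B_m$ a short computation with the constant terms of the modular prefactors $e_j,h_0,\varth_k$ gives the same value $1$ (consistently with $A_m(\tau,\veczero)=E_4$ and $B_m(\tau,\veczero)=E_6$). Feeding this into each numerator kills its $q^0$ coefficient; the higher coefficients up to $q^{s-1}$ are then extracted from the explicit theta expansions and checked to cancel as well. This is a finite but increasingly heavy computation, and I expect it to be the main obstacle, especially for $b_5,b_6$, where several leading coefficients of a long polynomial must all vanish. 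I would emphasize that these cancellations are not accidental: the combinations in \eqref{eq:abinABfull} are precisely the ones that \cite{Sakai:2011xg} reverse-engineered from the curve coefficients of \cite{Eguchi:2002fc}, which were constructed at the outset as $q$-series with non-negative powers. This gives both a consistency check and, if one is willing to invoke the original construction, an alternative route to the proposition that bypasses the direct computation altogether.
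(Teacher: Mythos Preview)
Your proposal is correct and takes essentially the same approach as the paper, whose proof consists of the two clauses ``by construction the $q$-expansions of $a_i,b_j$ are integral'' and ``the absence of negative powers can be shown by direct calculation.'' You have spelled out precisely how that direct calculation is organized (numerator must vanish to order $s$ at $q=0$, with $s$ the power of $\Delta$ in the denominator), and your alternative of invoking the original construction in \cite{Eguchi:2002fc} is exactly the paper's ``by construction'' clause.
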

\begin{proof}
By construction the $q$-expansions of $a_i,b_j$ are integral.
The absence of negative powers can be shown by direct calculation.
\end{proof}
$a_i^{(0)},b_j^{(0)}$ were explicitly computed
in \cite[Appendix~B]{Eguchi:2002fc}:
\begin{align}
a_2^{(0)}
 &=-\frac{2}{3}w_1+12w_8-1440,\qquad
a_3^{(0)}
  =-2w_2+96w_1-1152w_8+103680,\nn\\
a_4^{(0)}
 &=\frac{4}{3}w_1^2-4w_3-16w_6-48w_1w_8-144w_8^2\nn\\
 &\hspace{1em}
  +400w_2+1440w_7+1728w_1+41472w_8-2073600,\nn\\
b_1^{(0)}
 &=-4,\qquad
b_2^{(0)}
  =-\frac{1}{18}w_1-3w_8+840,\nn\\
b_3^{(0)}
 &=-\frac{1}{6}w_2-4w_7-8w_1+528w_8-79680,\nn\\
\noalign{\break}
b_4^{(0)}
 &=\frac{2}{9}w_1^2-\frac{1}{3}w_3-\frac{16}{3}w_6
  -24w_1w_8-120w_8^2\nn\\
 &\hspace{1em}
  +\frac{424}{3}w_2+1272w_7+4608w_1-25920w_8+3939840,\nn\\
b_5^{(0)}
 &=\frac{2}{3}w_1w_2-4w_5-16w_1w_7+64w_2w_8+288w_7w_8
  -96w_1^2-60w_3-160w_6\nn\\
 &\hspace{1em}
  +3456w_8^2+800w_2-24480w_7-108480w_1+933120w_8-97873920,\nn\\
b_6^{(0)}
 &=-\frac{8}{27}w_1^3+w_2^2+\frac{4}{3}w_1w_3
  -4w_4-\frac{32}{3}w_1w_6-48w_1^2w_8+48w_2w_7+288w_7^2\nn\\
 &\hspace{1em}
  -40w_3w_8-480w_6w_8-2592w_1w_8^2-9792w_8^3
  +\frac{1124}{3}w_1w_2+548w_5\nn\\
 &\hspace{1em}
  +6688w_1w_7+1884w_2w_8+25632w_7w_8+24576w_1^2
  +12920w_3+88320w_6\nn\\
 &\hspace{1em}
  +578688w_1w_8+1714176w_8^2-1694400w_2-8460000w_7-30102720w_1\nn\\
 &\hspace{1em}
  -104198400w_8+721612800.
\label{eq:abq0}
\end{align}
Here, $w_j\ (j=1,\ldots,8)$ denote
the Weyl orbit characters associated with 
the fundamental weights $\vecL_j^{E_8}$ of $E_8$
(see Appendix~\ref{app:E8} for our convention)
\begin{align}
w_j(\vecz)
 :=\sum_{\vecv\in\mbox{\scriptsize{Weyl orbit of }}\vecL_j^{E_8}}
   e^{2\pi \ri\vecv\cdot\vecz}.
\end{align}
The Weyl orbit of a weight $\bvec{\Lambda}$ is
the set of all weights obtained from $\bvec{\Lambda}$
by the action of the Weyl group.

\begin{prop}
\label{lem:ab0indep}
$\{a_i^{(0)}\}_{i=2}^4,\{b_j^{(0)}\}_{j=2}^6$
are algebraically independent over $\bbC$.
\end{prop}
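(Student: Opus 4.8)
The plan is to reduce the statement to a single Jacobian computation. First I would record the structural observation, manifest from \eqref{eq:abq0}, that each of the eight functions $a_2^{(0)},a_3^{(0)},a_4^{(0)},b_2^{(0)},b_3^{(0)},b_4^{(0)},b_5^{(0)},b_6^{(0)}$ is a polynomial in the Weyl orbit characters $w_1,\dots,w_8$ (the constant $b_1^{(0)}=-4$ plays no role). Thus it suffices to show that, regarded as elements of $\bbC[w_1,\dots,w_8]$, these eight polynomials are algebraically independent over $\bbC$.

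The key input I would invoke is that $w_1,\dots,w_8$ are themselves algebraically independent over $\bbC$. This is classical: the orbit sums of the fundamental weights generate the ring of $W(E_8)$-invariant exponential polynomials as a \emph{polynomial} algebra in $8=\mathrm{rank}(E_8)$ variables (equivalently, the fundamental characters generate the representation ring of the simply connected group, and the orbit sums differ from the characters by a transition that is unitriangular with respect to the dominance order). Since distinct exponentials $e^{2\pi\ri\vecv\cdot\vecz}$ are linearly independent as functions on $\bbC^8$, algebraic independence inside the group algebra coincides with algebraic independence as functions of $\vecz$. Hence $\bbC[w_1,\dots,w_8]$ is a genuine polynomial ring, and by the Jacobian criterion (valid in characteristic zero) the eight functions are algebraically independent over $\bbC$ precisely when the Jacobian determinant $\det\bigl(\partial(a^{(0)},b^{(0)})/\partial(w)\bigr)$ is not identically zero.

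Next I would exhibit this Jacobian as block triangular. Order the rows by the pairs $(a_2^{(0)},b_2^{(0)})$, $(a_3^{(0)},b_3^{(0)})$, $(a_4^{(0)},b_4^{(0)})$, $(b_5^{(0)},b_6^{(0)})$ and the columns by the paired variables $(w_1,w_8)$, $(w_2,w_7)$, $(w_3,w_6)$, $(w_5,w_4)$. Inspecting \eqref{eq:abq0} shows that the two functions in each row-pair involve no orbit character lying in a strictly later column-pair; the matrix is therefore block triangular with $2\times2$ diagonal blocks
\[
\begin{pmatrix} -\tfrac{2}{3} & 12 \\ -\tfrac{1}{18} & -3 \end{pmatrix},\quad
\begin{pmatrix} -2 & 0 \\ -\tfrac{1}{6} & -4 \end{pmatrix},\quad
\begin{pmatrix} -4 & -16 \\ -\tfrac{1}{3} & -\tfrac{16}{3} \end{pmatrix},\quad
\begin{pmatrix} -4 & 0 \\ 548 & -4 \end{pmatrix},
\]
whose determinants are $\tfrac{8}{3}$, $8$, $16$, $16$ respectively. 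Since their product is nonzero, the full Jacobian determinant is nonzero, which completes the proof.

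I expect the only genuinely non-routine point to be the structural input of the second paragraph—the algebraic independence of the fundamental orbit characters $w_1,\dots,w_8$—which is where the representation theory of $E_8$ enters; once this is granted, the remaining step is the entirely mechanical triangular determinant above. If one wished to avoid citing the polynomiality of the $W(E_8)$-invariant ring, the same triangular pattern could in principle be propagated down to the Jacobian with respect to $z_1,\dots,z_8$ directly, but that would require tracking the $\vecz$-derivatives of the orbit sums and is considerably messier.
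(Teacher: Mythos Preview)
Your proof is correct and follows essentially the same approach as the paper: both invoke the algebraic independence of the Weyl orbit characters $w_1,\dots,w_8$ and then verify nonvanishing of the Jacobian determinant with respect to them. The paper simply records the value $16384/3$, whereas your block-triangular organization (whose diagonal $2\times 2$ determinants multiply to $(8/3)\cdot 8\cdot 16\cdot 16 = 16384/3$) makes the nonvanishing transparent without expanding the full $8\times 8$ determinant.
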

\begin{proof}
This follows from the algebraic independence of
the Weyl orbit characters $\{w_j\}_{j=1}^8$.
Using the expression \eqref{eq:abq0}
one can compute the Jacobian determinant
\begin{align}
\left|
\frac{\partial(a_2^{(0)},a_3^{(0)},a_4^{(0)},
               b_2^{(0)},b_3^{(0)},b_4^{(0)},b_5^{(0)},b_6^{(0)})}
     {\partial(w_1,w_2,w_3,w_4,w_5,w_6,w_7,w_8)}\right|
=\frac{16384}{3}
\ne 0,
\end{align}
which proves the proposition.
\end{proof}
%

%%%%%%%%%%%%%%%%%%%%%%%%%%%%%%%%%%%%%%%%%%%%%%%%%%%%%%%%%%%%%%%%%%%%%%%%
\section{Main theorems}\label{sec:proof}
%%%%%%%%%%%%%%%%%%%%%%%%%%%%%%%%%%%%%%%%%%%%%%%%%%%%%%%%%%%%%%%%%%%%%%%%

In this section we will prove the main theorems of this paper.
\begin{thm}[{Conjecture of \cite[Sec.3.2]{Sakai:2017ihc}}]
\label{thm:subset}
The bigraded algebra $J^{E_8}_{*,*}$
of $W(E_8)$-invariant weak Jacobi forms
is a proper subset of the polynomial algebra
generated by $\{a_i\}_{i=2}^4, \{b_j\}_{j=1}^6$
over $M_*=\bbC[E_4,E_6]$. In other words,
\begin{align}
J^{E_8}_{*,*} \subsetneq
\bbC[E_4,E_6,a_2,a_3,a_4,b_1,b_2,b_3,b_4,b_5,b_6].
\end{align}
\end{thm}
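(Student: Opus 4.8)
Write $\cR:=\bbC[E_4,E_6,a_2,a_3,a_4,b_1,\dots,b_6]$ for the polynomial algebra in the statement. The claim has two halves, and the properness is immediate: $b_1=-4A_1/E_4$ lies in $\cR$ but is meromorphic with a pole along $\{E_4=0\}$ (its weight $0$ and index $1$ also fall below the minimal weight $4$ of genuine index-$1$ forms), so $b_1\in\cR\setminus J^{E_8}_{*,*}$. All the work is in the inclusion $J^{E_8}_{*,*}\subseteq\cR$, and my plan is to feed the Sun--Wang normal form into $\cR$.

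First, Proposition \ref{prop:ABinab} gives $A_i,B_j\in\cR$, hence $\bbC[E_4,E_6,A_i,B_j]\subseteq\cR$ and in particular $P_{16,5}\in\cR$. Next I would check that $\Phi:=P_{16,5}/E_4\in\cR$: substitute the expressions \eqref{eq:ABinab} for $A_i,B_j$ into \eqref{eq:P16c5}, obtain $P_{16,5}$ as a polynomial in $E_4,E_6,a_k,b_l$, and verify that every monomial carries a factor of $E_4$ (equivalently, that this polynomial vanishes at $E_4=0$), so the quotient is again in $\cR$. Granting this, for any $\varphi\in J^{E_8}_{*,t}$ Theorem \ref{thm:SunWang}(3) writes $\varphi=\Delta^{-N_t}G$ with $G:=\sum_{j=0}^{t_1}\Phi^{t_1-j}P_j$; since $\Phi,A_i,B_j,E_4,E_6\in\cR$ and the $P_j$ are polynomials in these, $G\in\cR$, while $G=\Delta^{N_t}\varphi$ is itself a holomorphic Jacobi form.

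It remains to clear the denominator $\Delta^{N_t}$, and this is the crux. I would prove the divisibility lemma: if $G\in\cR$ is a Jacobi form and $G/\Delta$ is holomorphic (hence again a weak Jacobi form), then $G/\Delta\in\cR$. The generators $a_i,b_j$ are algebraically independent over $M_*=\bbC[E_4,E_6]$, so $\cR$ is free over $M_*$ on the monomials in $a_i,b_j$; write $G=\sum_\mu c_\mu\,\mathbf{m}_\mu$ with $c_\mu\in M_*$. Because $G$ has index $t$ and $\Delta$ index $0$, every $\mathbf{m}_\mu$ occurring has index exactly $t$. The key observation is that $b_1$ is the \emph{unique} generator of index $1$, so at fixed index $t$ the exponent $q_1$ of $b_1$ is determined by the remaining exponents; hence the constant ($q^0$) terms $\mathbf{m}_\mu^{(0)}=(-4)^{q_1}\prod_i(a_i^{(0)})^{p_i}\prod_{j\ge2}(b_j^{(0)})^{q_j}$ are distinct monomials in the algebraically independent quantities of Proposition \ref{lem:ab0indep}, and therefore linearly independent. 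Holomorphicity of $G/\Delta$ at the cusp forces the $q^0$-term of $G$ to vanish, i.e. $\sum_\mu c_\mu(1,1)\,\mathbf{m}_\mu^{(0)}=0$ (here $c_\mu(1,1)$ is the value at $E_4=E_6=1$, the constant term of the modular form $c_\mu$); by linear independence $c_\mu(1,1)=0$ for every $\mu$. Since for $\grp{SL}_2(\bbZ)$ a modular form with vanishing constant term is a cusp form, $\Delta\mid c_\mu$ in $M_*$, so $G/\Delta=\sum_\mu(c_\mu/\Delta)\mathbf{m}_\mu\in\cR$. Applying the lemma $N_t$ times to $\Delta^{N_t}\varphi,\Delta^{N_t-1}\varphi,\dots$ (each a Jacobi form of positive $q$-order) descends to $\varphi\in\cR$.

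The main obstacle is precisely this last step. A priori Sun--Wang only places $\varphi$ in the localization $\cR[\Delta^{-1}]$, and the algebraic $\Delta$-adic valuation on $\cR$ is not directly read off from analytic data, since $\Delta$ vanishes only at the cusp, where $E_4,E_6\to1$ simultaneously. What rescues the argument is that restricting to a single index collapses this ambiguity—the leading $q^0$ terms become linearly independent—so that the cusp-form characterization $\Delta M_*=\{\text{cusp forms}\}$ can be applied coefficient-by-coefficient. The same $q$-expansion argument also supplies the algebraic independence of $a_i,b_j$ over $M_*$ that underlies the free-module structure, while the divisibility $E_4\mid P_{16,5}$ placing $\Phi$ in $\cR$ is a finite computation.
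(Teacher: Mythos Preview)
Your proposal is correct and follows essentially the same route as the paper: invoke Sun--Wang to place $\Delta^{N_t}\varphi$ in $\cR$ (using $A_i,B_j\in\cR$ and the explicit verification that $P_{16,5}/E_4\in\cR$, which the paper carries out as \eqref{eq:P12c5inab}), then strip off the $\Delta$'s one at a time by a $q^0$-vanishing argument resting on Proposition~\ref{lem:ab0indep}. The paper phrases this last step as ``the only algebraic relation among $E_4^{(0)},E_6^{(0)},a_i^{(0)},b_j^{(0)}$ compatible with the bigrading is a multiple of $(E_4^{(0)})^3-(E_6^{(0)})^2$,'' while you unpack the same content via the $M_*$-module basis and the observation that at fixed index the exponent of $b_1$ is pinned down, making the $\mathbf m_\mu^{(0)}$ linearly independent; these are two presentations of the same idea.
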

\begin{proof}
Let $\cR$ denote the polynomial algebra
generated by $a_i,b_j$ over $M_*$:
\begin{align}
\cR:=\bbC[E_4,E_6,a_2,a_3,a_4,b_1,b_2,b_3,b_4,b_5,b_6].
\end{align}
By Theorem~\ref{thm:SunWang} (3) of Sun and Wang,
any $W(E_8)$-invariant weak Jacobi form
$\phi_t$ of index $t$ is expressed as in \eqref{eq:SunWangform}:
\begin{align}
\phi_t=
\frac{\sum_{j=0}^{t_1}\left({P_{16,5}}/{E_4}\right)^{t_1-j}P_j}
     {\Delta^{N_t}}.
\label{eq:SunWangformbis}
\end{align}
Proposition~\ref{prop:ABinab} states that
$A_i,B_j\in\cR$, so that $P_j\in\cR$.
Moreover, substituting \eqref{eq:ABinab} into \eqref{eq:P16c5}
one obtains
\begin{align}
\frac{P_{16,5}}{E_4}
&=\frac{1}{9216}(24\Delta E_4^2E_6a_2b_1b_2-18\Delta E_4^2E_6a_3b_1^2
 +20736\Delta E_4^2a_2b_1^3+5\Delta E_4E_6^2a_2^2b_1\nn\\
&\hspace{2em}
 -28440E_6^2b_1^5-336\Delta^2E_4E_6a_2a_3+4824\Delta E_6^2b_1^2b_3
 -1008\Delta E_6^2b_1b_2^2\nn\\
&\hspace{2em}
 -991872\Delta E_6b_1^3b_2-13436928\Delta b_1^5-384\Delta^2E_6^2b_5
 +27648\Delta^2E_6b_1b_4\nn\\
&\hspace{2em}
 +76032\Delta^2E_6b_2b_3-12690432\Delta^2b_1b_2^2),
\label{eq:P12c5inab}
\end{align}
which means that $P_{16,5}/E_4\in\cR$.
Hence, \eqref{eq:SunWangformbis} implies that
$\Delta^{N_t}\phi_t\in\cR$,
i.e.~it is
written as some polynomial $Q$ of $E_4,E_6,a_i,b_j$:
\begin{align}
\Delta^{N_t}\phi_t=Q(E_4,E_6,\{a_i\},\{b_j\}).
\end{align}

Since $\Delta=q+O(q^2)$ and any $W(E_8)$-invariant Jacobi form
has a regular power series expansion in $q$,
the $q$-expansion of $\Delta^{N_t}\phi_t$ starts
at the order of $q^n,\ n\ge N_t$.
Therefore, the $O(q^0)$ part of $Q(E_4,E_6,\{a_i\},\{b_j\})$
has to vanish, i.e.~there is an algebraic relation among
the $O(q^0)$ parts of $E_4,E_6,a_i,b_j$, denoted by
$E_4^{(0)}, E_6^{(0)}, a_i^{(0)}, b_j^{(0)}$.
We see that $E_4^{(0)}=1,\ E_6^{(0)}=1,\ b_1^{(0)}=-4$,
while $\{a_i^{(0)}\}_{i=2}^4,\{b_j^{(0)}\}_{j=2}^6$
are algebraically independent
by Proposition~\ref{lem:ab0indep}.
Since $b_1$ is of weight $0$ and index $1$ while
$E_4,E_6$ are of weight $4,6$ respectively and of index $0$,
the only possible algebraic relation
compatible with the bigrading is of the form
$\bigl(E_4^{(0)}\bigr)^3-\bigl(E_6^{(0)}\bigr)^2=0$
multiplied by some polynomial
of $E_4^{(0)}, E_6^{(0)}, a_i^{(0)}, b_j^{(0)}$.
This means that the polynomial $Q$ is
divisible by $\Delta=(E_4^3-E_6^2)/1728$,
i.e.~it is written as $Q=\Delta Q_1$ with $Q_1\in\cR$.

The Fourier expansion of $Q_1$ starts at the order of
$q^n,\ n\ge N_t-1$. One can repeat the same discussion as above
and show that $Q_1$ is written as
$Q_1=\Delta Q_2$ with $Q_2\in\cR$.
In this way, one can show that $Q$ is written as
$Q=\Delta Q_1=\Delta^2 Q_2=\cdots=\Delta^{N_t}Q_{N_t}$
with $Q_{N_t}\in\cR$, which gives $\phi_t$.
Thus we have proved that any $\phi_t\in J^{E_8}_{*,*}$
satisfies $\phi_t\in \cR$.
Since $a_i,b_j$ are meromorphic and are not
the elements of $J^{E_8}_{*,*}$, $\cR$ is bigger than $J^{E_8}_{*,*}$.
Hence $J^{E_8}_{*,*}\subsetneq\cR$.
\end{proof}
\begin{rem*}
A similar embedding of $J^{E_8}_{*,*}$ into a polynomial algebra
was considered in \cite[Theorem 5.1]{Wang:2020pzq},
where the polynomial algebra is generated by
$A_i/g^i,B_j/g^j$ with $g$ being the product of $\Delta^8E_4$
and a modular form of weight $72$.
Our Theorem~\ref{thm:subset} is stronger than this theorem.
\end{rem*}

The following theorem is rather a corollary of
Theorem~\ref{thm:SunWang} of Sun and Wang,
%but is very useful in formulating our algorithm.
but is very useful when combined with Theorem~\ref{thm:subset}.
\begin{thm}\label{thm:E8cond}
Suppose that
$P\in\cR=\bbC[E_4,E_6,a_2,a_3,a_4,b_1,b_2,b_3,b_4,b_5,b_6]$.
Then the following two conditions are equivalent
\begin{align}
P\in J^{E_8}_{*,*}\quad\Leftrightarrow\quad
P\in\bbC[\Delta^{-1},E_4,E_6,A_1,A_2,A_3,A_4,A_5,B_2,B_3,B_4,B_6,
         (P_{16,5}/E_4)].
\end{align}
\end{thm}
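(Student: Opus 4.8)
The plan is to prove the two implications separately, with essentially all of the substantive content supplied by Theorem~\ref{thm:SunWang} of Sun and Wang. Write $\mathcal{S}:=\bbC[\Delta^{-1},E_4,E_6,A_1,A_2,A_3,A_4,A_5,B_2,B_3,B_4,B_6,(P_{16,5}/E_4)]$ for the algebra on the right-hand side. The first observation I would record is that both $\cR$ and $\mathcal{S}$ are generated by bihomogeneous elements (of definite weight and index), and both sit inside the same ambient algebra of $W(E_8)$-invariant meromorphic functions on $\bbH\times\bbC^8$ that transform as quasi-periodic, possibly meromorphic, Jacobi forms. Hence the intrinsic (weight, index) bigrading of the ambient algebra restricts to a bigrading of each, and the bihomogeneous component of any element of $\cR$ (resp.\ $\mathcal{S}$) again lies in $\cR$ (resp.\ $\mathcal{S}$). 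Consequently it suffices to treat a single bihomogeneous $P$ of weight $k$ and index $t$ and then sum over components.

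For the implication $P\in J^{E_8}_{*,*}\Rightarrow P\in\mathcal{S}$, I would simply invoke Theorem~\ref{thm:SunWang}(3). A bihomogeneous $W(E_8)$-invariant weak Jacobi form $P$ of index $t$ admits the representation
\begin{align*}
P=\Delta^{-N_t}\sum_{j=0}^{t_1}\left(P_{16,5}/E_4\right)^{t_1-j}P_j,
\end{align*}
with each $P_j\in\bbC[E_4,E_6,\{A_i\},\{B_j\}]$. The right-hand side is visibly a polynomial in $\Delta^{-1}$, the $A_i$, the $B_j$, $E_4$, $E_6$ and $(P_{16,5}/E_4)$, so $P\in\mathcal{S}$. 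I note that this direction uses nothing about $\cR$: it holds for \emph{every} weak Jacobi form.

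For the converse $P\in\mathcal{S}\Rightarrow P\in J^{E_8}_{*,*}$, under the standing hypothesis $P\in\cR$, the idea is that the two memberships control complementary potential defects. From $P\in\mathcal{S}$ I would read off that $P$ is holomorphic on $\bbH\times\bbC^8$ and satisfies Weyl invariance, quasi-periodicity and the modular transformation law of weight $k$ and index $t$: each generator of $\mathcal{S}$ is holomorphic there (the factor $\Delta^{-1}$ because $\Delta$ has no zero on $\bbH$; the $A_i,B_j$ are holomorphic Jacobi forms; $P_{16,5}/E_4$ is holomorphic by Theorem~\ref{thm:SunWang}(1); $E_4,E_6$ are modular), and each satisfies the transformation laws, so the bihomogeneous combination $P$ does as well. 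From $P\in\cR$ I would obtain the remaining property~(iv): by Proposition~\ref{prop:abqexp} the generators $a_i,b_j$, together with $E_4,E_6$, have Fourier expansions involving only non-negative powers of $q$ and only characters $e^{2\pi\ri\vecw\cdot\vecz}$ with $\vecw\in L_{E_8}^*$, and this is preserved under sums and products, hence holds for $P$. The conjunction of these facts is precisely Definition~\ref{def:Jacobi}, so $P\in J^{E_8}_{k,t}$.

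The substantive content of the theorem is thus carried entirely by Theorem~\ref{thm:SunWang}; I do not expect a serious obstacle. The one genuinely conceptual step is in the converse: recognizing that $\mathcal{S}$-membership guarantees good behavior in the interior $\bbH\times\bbC^8$ (where the poles of $a_i,b_j$ at the zeros of $E_4$ would otherwise spoil holomorphicity), while $\cR$-membership independently forbids the negative powers of $q$ that the factor $\Delta^{-1}$ could introduce at the cusp. The only care needed is the grading-compatibility bookkeeping of the first paragraph and the routine verification that products of the listed generators preserve both the interior holomorphicity and the $q$-regularity invoked above.
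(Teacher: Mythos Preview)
Your proposal is correct and follows essentially the same route as the paper: the forward direction is Theorem~\ref{thm:SunWang}(3), and the reverse direction combines holomorphicity on $\bbH\times\bbC^8$ (from $\mathcal{S}$-membership, using Theorem~\ref{thm:SunWang}(1) for $P_{16,5}/E_4$) with the $q$-regularity of property~(iv) (from $\cR$-membership via Proposition~\ref{prop:abqexp}). The paper attributes properties (i)--(iii) to $\cR$-membership rather than $\mathcal{S}$-membership, but since both algebras are generated by functions satisfying these transformation laws this is an immaterial difference; your additional paragraph on bigrading compatibility is a reasonable elaboration the paper leaves implicit.
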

\begin{proof}
$(\Rightarrow)$ This is a direct consequence of
Theorem~\ref{thm:SunWang} (3).
$(\Leftarrow)$
Any element of $\cR$ satisfies properties (i)--(iii) of
Definition~\ref{def:Jacobi} by construction
and also (iv) by Proposition~\ref{prop:abqexp}.
Therefore, we have only to show that $P$ is holomorphic
on $\bbH\times\bbC^8$.
The generators other than $(P_{16,5}/E_4)$ are holomorphic
by construction. $(P_{16,5}/E_4)$ is holomorphic
by Theorem~\ref{thm:SunWang} (1).
\end{proof}
%

%%%%%%%%%%%%%%%%%%%%%%%%%%%%%%%%%%%%%%%%%%%%%%%%%%%%%%%%%%%%%%%%%%%%%%%%
\section{Constructing $W(E_8)$-invariant Jacobi forms of higher index}
\label{sec:results}
%%%%%%%%%%%%%%%%%%%%%%%%%%%%%%%%%%%%%%%%%%%%%%%%%%%%%%%%%%%%%%%%%%%%%%%%

%%%
\subsection{Algorithm}
%%%

Using the theorems proved in the last section
we can formulate an efficient
algorithm for constructing all $W(E_8)$-invariant weak Jacobi forms
of given weight $k$ and index $m$, as described below:
\begin{algo}
\renewcommand{\theenumi}{\arabic{enumi}}
\renewcommand{\labelenumi}{(\theenumi)}
~\\[-3.5ex]
\begin{enumerate}
\item Let $P$ be the most general polynomial of $E_4,E_6,a_i,b_j$
of weight $k$ and index $m$. This is our ansatz.
More specifically, $P$ can be easily constructed by
taking the $O(x^ky^m)$ part of the generating series
\begin{align}
\frac{1}{(1-x^4E_4)(1-x^6E_6)
         \prod_{i=2}^4(1-x^{4-6i}y^ia_i)
         \prod_{j=1}^6(1-x^{6-6j}y^jb_j)}
\end{align}
and then inserting undetermined coefficient $c_i$
in front of every $i$th monomial.

\item Substitute \eqref{eq:abinABfull} into $P$
to express it in terms of $\Delta,E_4,E_6,A_i,B_j$.
In general, $P$ contains negative powers of $\Delta$ and $E_4$.
Let $-n$ be the lowest degree of $\Delta$.
Then multiply $P$ by $\Delta^n$, so that
$\Delta^n P$ no longer contains negative powers of $\Delta$.

\item Express $\Delta^n P$ in the form
\begin{align}
\Delta^n P
 =\sum_{l=1}^{l_1}
  \frac{Q_l(E_6,\{A_i\},\{B_j\})}{E_4^l}
 +R(E_4,E_6,\{A_i\},\{B_j\}),
\label{eq:PinQR}
\end{align}
where $l_1$ is some positive integer and
$Q_l,R$ are some polynomials.

\item For every $l=1,\ldots, l_1$,
let $S_l(E_6,\{A_i\},\{B_j\})$ be the most general polynomial
of weight $k+12n-12l$ and index $m-5l$
with undetermined coefficients $d_{l,i}$.
More specifically, $S_l$ can be easily constructed by
taking the $O(x^{k+12n-12l}y^{m-5l})$ part of the generating series
\begin{align}
\frac{1}{(1-x^6E_6)
         \prod_{i=1,2,3,4,5}(1-x^4 y^i A_i)
         \prod_{j=2,3,4,6}(1-x^6 y^j B_j)}
\end{align}
and then inserting $d_{l,i}$
in front of every $i$th monomial.

\item Solve the linear equations among $c_i$ and $d_{l,i}$
in such a way that
\begin{align}
Q_l(E_6,\{A_i\},\{B_j\})=(P_{16,5})^l S_l(E_6,\{A_i\},\{B_j\})
\quad (l=1,\ldots,l_1)
\label{eq:QScond}
\end{align}
hold identically.

\item Substitute the general solution back into $P$.
This gives the most general linear combination of
$W(E_8)$-invariant weak Jacobi forms of weight $k$ and index $m$.
If the trivial solution $c_i=d_{l,i}=0$ is the only solution,
there are no non-zero
$W(E_8)$-invariant weak Jacobi forms of weight $k$ and index $m$.
\end{enumerate}
\end{algo}

As an illustration, let us first construct
all $W(E_8)$-invariant weak Jacobi forms of weight $-16$ and index $5$.
The ansatz takes the form
\begin{align}
P=c_1 E_4^2 b_5
 +c_2 E_6 a_2 a_3
 +c_3 E_4 a_2 b_3
 +c_4 E_4 a_3 b_2
 +c_5 E_4 a_4 b_1
 +c_6 a_2^2 b_1.
\label{eq:m16c5ans}
\end{align}
By substituting \eqref{eq:abinABfull} into $P$,
one finds that the lowest degree of $\Delta$ is $-3$.
Therefore, expanding $\Delta^3P$ as in \eqref{eq:PinQR}
one obtains
\begin{align}
\begin{aligned}
Q_1
 &=(-\tfrac{10}{3}c_1+20c_2+\tfrac{5}{3}c_3-\tfrac{20}{9}c_4
    -\tfrac{10}{9}c_5)E_6A_1^3B_2\\
 &\hspace{1em}
  +(-\tfrac{14}{3}c_2+\tfrac{35}{54}c_4+\tfrac{7}{27}c_5)E_6^2A_1^2A_3
  +(6c_2+\tfrac{1}{12}c_6)E_6^2A_1A_2^2,\\
Q_2
 &=(-10c_2+\tfrac{5}{9}c_3+\tfrac{5}{6}c_4+\tfrac{1}{6}c_5
    -\tfrac{1}{6}c_6)E_6^2A_1^3A_2,\\
Q_3
 &=(\tfrac{1}{3}c_1+4c_2-\tfrac{5}{9}c_3-\tfrac{5}{9}c_4
   -\tfrac{1}{12}c_5+\tfrac{1}{12}c_6)E_6^2A_1^5.
\end{aligned}
\end{align}
In the present case, all $S_l$ are trivial,
i.e.~$S_1=S_2=S_3=0$ because there are no polynomials of $E_6,A_i,B_j$
of $(\mbox{weight},\mbox{index})=(8,0),(-4,-5),(-16,-10)$.
Thus, \eqref{eq:QScond} become simply $Q_l=0$ for $l=1,2,3$.
In order for these equations to hold identically,
$\{c_i\}_{i=1}^6$ have to satisfy five linear equations
(which are not entirely independent with each other).
The equations are solved as
\begin{align}
c_3=\frac{18}{5}c_1+\frac{36}{5}c_2,\quad
c_4=-\frac{24}{5}c_1-\frac{108}{5}c_2,\quad
c_5=12c_1+72c_2,\quad
c_6=-72c_2.
\end{align}
Substituting this
back into the original ansatz \eqref{eq:m16c5ans},
one obtains
\begin{align}
\begin{aligned}
&c_1\left(E_4^2b_5+\frac{18}{5}E_4a_2b_3-\frac{24}{5}E_4a_3b_2
          +12E_4a_4b_1\right)\\
{}+{}&c_2\left(E_6a_2a_3+\frac{36}{5}E_4a_2b_3-\frac{108}{5}E_4a_3b_2
 +72E_4a_4b_1-72a_2^2b_1\right).
\end{aligned}
\end{align}
This is the most general linear combination of
$W(E_8)$-invariant weak Jacobi forms
of weight~$-16$ and index~$5$.
Clearly, $\mathrm{dim} J^{E_8}_{-16,5}=2$.

Next, let us consider the case of weight~$-26$ and index~$7$
as another example. The ansatz takes the form
\begin{align}
\begin{aligned}
P&=
 c_1E_4^2a_3a_4
+c_2E_6a_2b_5
+c_3E_6a_3b_4
+c_4E_6a_4b_3\\
&\hspace{1em}
+c_5E_4a_2^2a_3
+c_6E_4b_1b_6
+c_7E_4b_2b_5
+c_8E_4b_3b_4\\
&\hspace{1em}
+c_9a_2b_1b_4
+c_{10}a_2b_2b_3
+c_{11}a_3b_1b_3
+c_{12}a_3b_2^2
+c_{13}a_4b_1b_2.
\end{aligned}
\label{eq:Pex2}
\end{align}
In this case, one finds that $n=5$, $l_1=6$ and
$S_l$ are constructed as
\begin{align}
S_1=d_{1,1}E_6^3A_2,\qquad S_2=S_3=S_4=S_5=S_6=0.
\end{align}
The linear equations among $\{c_i\}_{i=1}^{13}$ and $d_{1,1}$
are solved as
\begin{align}
\begin{aligned}
c_2&= 20736d_{1,1},&
c_3&= -\tfrac{41472}{5}d_{1,1},&
c_6&= 746496d_{1,1},&
c_7&= -\tfrac{746496}{5}d_{1,1},\\
c_8&= \tfrac{746496}{25}d_{1,1},&
c_9&= -\tfrac{4478976}{5}d_{1,1},&
c_{10}&= \tfrac{4478976}{25}d_{1,1},&
c_{11}&= \tfrac{4478976}{5}d_{1,1},\\
c_{12}&= -\tfrac{8957952}{25}d_{1,1},&
c_1&=c_4=c_5=c_{13}=0.\hspace{-4em}
\end{aligned}
\end{align}
Substituting this back into \eqref{eq:Pex2}
and setting $d_{1,1}=25/20736$, one obtains
\begin{align}
\begin{aligned}
%\tfrac{20736}{25}d_{1,1}\bigl(&
&25E_6a_2b_5
-10E_6a_3b_4
+900E_4b_1b_6
-180E_4b_2b_5
+36E_4b_3b_4\\
&
-1080a_2b_1b_4
+216a_2b_2b_3
+1080a_3b_1b_3
-432a_3b_2^2.
\end{aligned}
\end{align}
This is the unique generator of the vector space $J^{E_8}_{-26,7}$.

%%%
\subsection{Free modules of given index}
%%%

It was proved in \cite[Theorem 4.1]{Wang:2018fil} that
the space $J^{E_8}_{*,m}=\bigoplus_{k\in\bbZ}J^{E_8}_{k,m}$
is a free module over $M_*$ and the rank $r(m)$ is given by
the generating series
\begin{align}
\frac{1}{(1-x)(1-x^2)^2(1-x^3)^2(1-x^4)^2(1-x^5)(1-x^6)}
 =\sum_{m=0}^\infty r(m)x^m.
\end{align}
To understand the structure of $J^{E_8}_{*,m}$ for given index $m$,
it is sufficient to determine $r(m)$ generators.
It was proved in \cite[Proposition 6.1]{Wang:2018fil} that
the weight of non-zero $W(E_8)$-invariant weak Jacobi forms
of index $m$ is not less than $-5m$.
It was also proved in \cite[Proposition 6.4]{Sun:2021ije} that 
for any $m\ge 2$, the free module $J^{E_8}_{*,m}$ is generated
by Jacobi forms of non-positive weight.
Therefore, it is sufficient to construct
Jacobi forms of weight $k$ with $-5m\le k\le 0$ for $m\ge 2$.

In \cite{Sun:2021ije}, Sun and Wang determined
all generators of $J^{E_8}_{*,m}$ for $1\le m\le 13$.
Using our algorithm we determine
all generators of $J^{E_8}_{*,m}$ for $1\le m\le 20$.

\begin{prop}
Let $d_{k,m}$ denote the number of generators of weight $k$ of
$J^{E_8}_{*,m}$.
For $1\le m\le 20$, the Laurent polynomials
\begin{align}
P^\rw_m:=\sum_{k\in\bbZ}d_{k,m}x^k
\end{align}
are determined as
\begin{align}
P^\rw_1&=
 x^4,\qquad
P^\rw_2=
 x^{-4}+x^{-2}+1,\qquad
P^\rw_3=
 x^{-8}+x^{-6}+x^{-4}+x^{-2}+1,\nn\\
P^\rw_4&=
 x^{-16}+x^{-14}+x^{-12}+x^{-10}+2x^{-8}+x^{-6}+x^{-4}+x^{-2}+1,\nn\\
P^\rw_5&=
 2x^{-16}+2x^{-14}+3x^{-12}+2x^{-10}+2x^{-8}+x^{-6}+x^{-4}+x^{-2}+1,
\nn\\
P^\rw_6&=
 2x^{-24}+2x^{-22}+3x^{-20}+3x^{-18}+3x^{-16}+3x^{-14}+3x^{-12}
 +2x^{-10}+2x^{-8}\nn\\
&\hspace{1em}
 +x^{-6}+x^{-4}+x^{-2}+1,\nn\\
P^\rw_7&=
 x^{-26}+3x^{-24}+5x^{-22}+7x^{-20}+4x^{-18}+4x^{-16}+4x^{-14}+3x^{-12}
 +2x^{-10}\nn\\
&\hspace{1em}
 +2x^{-8}+x^{-6}+x^{-4}+x^{-2}+1,\nn\\
P^\rw_8&=
 2x^{-32}+4x^{-30}+7x^{-28}+6x^{-26}+7x^{-24}+6x^{-22}+6x^{-20}+5x^{-18}
 +5x^{-16}\nn\\
&\hspace{1em}
 +4x^{-14}+3x^{-12}+2x^{-10}+2x^{-8}+x^{-6}+x^{-4}+x^{-2}+1,\nn\\
P^\rw_9&=
 x^{-36}+2x^{-34}+8x^{-32}+10x^{-30}+11x^{-28}+9x^{-26}+9x^{-24}
 +7x^{-22}+7x^{-20}\nn\\
&\hspace{1em}
 +6x^{-18}+5x^{-16}+4x^{-14}+3x^{-12}+2x^{-10}+2x^{-8}+x^{-6}+x^{-4}
 +x^{-2}+1,\nn\\
P^\rw_{10}&=
 4x^{-40}+7x^{-38}+11x^{-36}+12x^{-34}+14x^{-32}+12x^{-30}+12x^{-28}
 +11x^{-26}\nn\\
&\hspace{1em}
 +10x^{-24}+8x^{-22}+8x^{-20}+6x^{-18}+5x^{-16}+4x^{-14}+3x^{-12}
 +2x^{-10}+2x^{-8}\nn\\
&\hspace{1em}
 +x^{-6}+x^{-4}+x^{-2}+1,\nn\\
P^\rw_{11}&=
 5x^{-42}+15x^{-40}+19x^{-38}+20x^{-36}+16x^{-34}+17x^{-32}+15x^{-30}
 +14x^{-28}\nn\\
&\hspace{1em}
 +12x^{-26}+11x^{-24}+9x^{-22}+8x^{-20}+6x^{-18}+5x^{-16}+4x^{-14}
 +3x^{-12}+2x^{-10}\nn\\
&\hspace{1em}
 +2x^{-8}+x^{-6}+x^{-4}+x^{-2}+1,\nn\\
\noalign{\break}
P^\rw_{12}&=
 8x^{-48}+13x^{-46}+21x^{-44}+22x^{-42}+22x^{-40}+22x^{-38}+22x^{-36}
 +20x^{-34}\nn\\
&\hspace{1em}
 +20x^{-32}+17x^{-30}+15x^{-28}+13x^{-26}+12x^{-24}+9x^{-22}+8x^{-20}
 +6x^{-18}\nn\\
&\hspace{1em}
+5x^{-16}+4x^{-14}+3x^{-12}+2x^{-10}+2x^{-8}+x^{-6}+x^{-4}+x^{-2}+1,
\nn\\
P^\rw_{13}&=
 2x^{-52}+10x^{-50}+24x^{-48}+32x^{-46}+37x^{-44}+28x^{-42}+29x^{-40}
 +28x^{-38}\nn\\
&\hspace{1em}
 +26x^{-36}+23x^{-34}+22x^{-32}+18x^{-30}+16x^{-28}+14x^{-26}+12x^{-24}
 +9x^{-22}\nn\\
&\hspace{1em}
 +8x^{-20}+6x^{-18}+5x^{-16}+4x^{-14}+3x^{-12}+2x^{-10}+2x^{-8}+x^{-6}
 +x^{-4}+x^{-2}\nn\\
&\hspace{1em}
 +1,\nn\\
P^\rw_{14}&=
 9x^{-56}+22x^{-54}+37x^{-52}+38x^{-50}+39x^{-48}+37x^{-46}+38x^{-44}
 +36x^{-42}\nn\\
&\hspace{1em}
 +35x^{-40}+32x^{-38}+29x^{-36}+25x^{-34}+23x^{-32}+19x^{-30}+17x^{-28}
 +14x^{-26}\nn\\
&\hspace{1em}
 +12x^{-24}+9x^{-22}+8x^{-20}+6x^{-18}+5x^{-16}+4x^{-14}+3x^{-12}
 +2x^{-10}+2x^{-8}\nn\\
&\hspace{1em}
 +x^{-6}+x^{-4}+x^{-2}+1,\nn\\
P^\rw_{15}&=
 5x^{-60}+19x^{-58}+44x^{-56}+55x^{-54}+55x^{-52}+48x^{-50}+49x^{-48}
 +46x^{-46}\nn\\
&\hspace{1em}
 +46x^{-44}+42x^{-42}+39x^{-40}+35x^{-38}+31x^{-36}+26x^{-34}+24x^{-32}
 +20x^{-30}\nn\\
&\hspace{1em}
 +17x^{-28}+14x^{-26}+12x^{-24}+9x^{-22}+8x^{-20}+6x^{-18}+5x^{-16}
 +4x^{-14}\nn\\
&\hspace{1em}
 +3x^{-12}+2x^{-10}+2x^{-8}+x^{-6}+x^{-4}+x^{-2}+1,\nn\\
P^\rw_{16}&=
 16x^{-64}+37x^{-62}+58x^{-60}+63x^{-58}+65x^{-56}+60x^{-54}+62x^{-52}
 +61x^{-50}\nn\\
&\hspace{1em}
 +59x^{-48}+54x^{-46}+52x^{-44}+46x^{-42}+42x^{-40}+37x^{-38}+32x^{-36}
 +27x^{-34}\nn\\
&\hspace{1em}
 +25x^{-32}+20x^{-30}+17x^{-28}+14x^{-26}+12x^{-24}+9x^{-22}+8x^{-20}
 +6x^{-18}\nn\\
&\hspace{1em}
 +5x^{-16}+4x^{-14}+3x^{-12}+2x^{-10}+2x^{-8}+x^{-6}+x^{-4}+x^{-2}+1,
\nn\\
P^\rw_{17}&=
 6x^{-68}+32x^{-66}+73x^{-64}+89x^{-62}+90x^{-60}+76x^{-58}+79x^{-56}
 +76x^{-54}\nn\\
&\hspace{1em}
 +75x^{-52}+71x^{-50}+67x^{-48}+60x^{-46}+56x^{-44}+49x^{-42}+44x^{-40}
 +38x^{-38}\nn\\
&\hspace{1em}
 +33x^{-36}+28x^{-34}+25x^{-32}+20x^{-30}+17x^{-28}+14x^{-26}+12x^{-24}
 +9x^{-22}\nn\\
&\hspace{1em}
 +8x^{-20}+6x^{-18}+5x^{-16}+4x^{-14}+3x^{-12}+2x^{-10}+2x^{-8}+x^{-6}
 +x^{-4}+x^{-2}\nn\\
&\hspace{1em}
 +1,\nn\\
P^\rw_{18}&=
 26x^{-72}+59x^{-70}+95x^{-68}+102x^{-66}+98x^{-64}+96x^{-62}+99x^{-60}
 +96x^{-58}\nn\\
&\hspace{1em}
 +96x^{-56}+90x^{-54}+85x^{-52}+79x^{-50}+73x^{-48}+64x^{-46}+59x^{-44}
 +51x^{-42}\nn\\
&\hspace{1em}
 +45x^{-40}+39x^{-38}+34x^{-36}+28x^{-34}+25x^{-32}+20x^{-30}+17x^{-28}
 +14x^{-26}\nn\\
&\hspace{1em}
 +12x^{-24}+9x^{-22}+8x^{-20}+6x^{-18}+5x^{-16}+4x^{-14}+3x^{-12}
 +2x^{-10}+2x^{-8}\nn\\
&\hspace{1em}
 +x^{-6}+x^{-4}+x^{-2}+1,\nn\\
\noalign{\break}
P^\rw_{19}&=
 12x^{-76}+56x^{-74}+112x^{-72}+139x^{-70}+140x^{-68}+117x^{-66}
 +122x^{-64}\nn\\
&\hspace{1em}
 +122x^{-62}+119x^{-60}+113x^{-58}+110x^{-56}+100x^{-54}+93x^{-52}
 +85x^{-50}\nn\\
&\hspace{1em}
 +77x^{-48}+67x^{-46}+61x^{-44}+52x^{-42}+46x^{-40}+40x^{-38}+34x^{-36}
 +28x^{-34}\nn\\
&\hspace{1em}
 +25x^{-32}+20x^{-30}+17x^{-28}+14x^{-26}+12x^{-24}+9x^{-22}+8x^{-20}
 +6x^{-18}\nn\\
&\hspace{1em}
 +5x^{-16}+4x^{-14}+3x^{-12}+2x^{-10}+2x^{-8}+x^{-6}+x^{-4}+x^{-2}+1,
\nn\\
P^\rw_{20}&=
 34x^{-80}+93x^{-78}+151x^{-76}+159x^{-74}+152x^{-72}+145x^{-70}
 +151x^{-68}\nn\\
&\hspace{1em}
 +149x^{-66}+149x^{-64}+143x^{-62}+137x^{-60}+127x^{-58}+120x^{-56}
 +108x^{-54}\nn\\
&\hspace{1em}
 +99x^{-52}+89x^{-50}+80x^{-48}+69x^{-46}+62x^{-44}+53x^{-42}+47x^{-40}
 +40x^{-38}\nn\\
&\hspace{1em}
 +34x^{-36}+28x^{-34}+25x^{-32}+20x^{-30}+17x^{-28}+14x^{-26}+12x^{-24}
 +9x^{-22}\nn\\
&\hspace{1em}
 +8x^{-20}+6x^{-18}+5x^{-16}+4x^{-14}+3x^{-12}+2x^{-10}+2x^{-8}+x^{-6}
 +x^{-4}+x^{-2}\nn\\
&\hspace{1em}
+1.
\label{eq:Pwlists}
\end{align}
\end{prop}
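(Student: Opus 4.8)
The plan is to reduce the determination of the generator-counting polynomials $P^\rw_m$ to a dimension count that the Algorithm of the previous subsection can carry out mechanically. Since $J^{E_8}_{*,m}$ is a free module over $M_*=\bbC[E_4,E_6]$ by \cite[Theorem 4.1]{Wang:2018fil}, and $M_*$ is the graded polynomial ring with Poincar\'e series $1/\bigl((1-x^4)(1-x^6)\bigr)$, any homogeneous basis consisting of generators of weights $k_1,\dots,k_{r(m)}$ yields the identity
\begin{align}
\sum_{k\in\bbZ}\bigl(\dim J^{E_8}_{k,m}\bigr)x^k
=\frac{\sum_{i=1}^{r(m)}x^{k_i}}{(1-x^4)(1-x^6)}
=\frac{P^\rw_m(x)}{(1-x^4)(1-x^6)}.
\end{align}
Hence $P^\rw_m(x)=(1-x^4)(1-x^6)\sum_k\bigl(\dim J^{E_8}_{k,m}\bigr)x^k$, and the numbers $d_{k,m}$ are precisely the graded ranks of the free module, so the whole statement follows once the dimensions $\dim J^{E_8}_{k,m}$ are known for all relevant $k$.

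Next I would compute these dimensions by running the Algorithm for each index $1\le m\le 20$ and each weight $k$ in a window guaranteed to contain every generator. For $m\ge 2$ the module is generated in non-positive weight by \cite[Proposition 6.4]{Sun:2021ije}, and every non-zero weak Jacobi form of index $m$ has weight at least $-5m$ by \cite[Proposition 6.1]{Wang:2018fil}; thus it suffices to sweep $-5m\le k\le 0$ (the case $m=1$ is immediate, since $J^{E_8}_{*,1}=M_*A_1$ gives $P^\rw_1=x^4$). The coefficient of $x^k$ in $(1-x^4)(1-x^6)=1-x^4-x^6+x^{10}$ times $\sum_k(\dim J^{E_8}_{k,m})x^k$ equals $\dim J^{E_8}_{k,m}-\dim J^{E_8}_{k-4,m}-\dim J^{E_8}_{k-6,m}+\dim J^{E_8}_{k-10,m}$, whose arguments stay in the same window for $k\le 0$, so dimensions in $-5m\le k\le 0$ already determine $P^\rw_m$. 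For each such $(k,m)$ the Algorithm returns the space of $W(E_8)$-invariant weak Jacobi forms as the solution set of a homogeneous linear system in the ansatz coefficients, whose dimension is exactly $\dim J^{E_8}_{k,m}$.

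Finally I would assemble these dimensions, extract the coefficients $d_{k,m}$ via the formula above, and check that they match the listed Laurent polynomials. Two internal consistency checks accompany this: the $d_{k,m}$ must be non-negative integers (forced by freeness), and their sum $\sum_k d_{k,m}$ must equal the rank $r(m)$ predicted by the generating series of \cite[Theorem 4.1]{Wang:2018fil}. That the coefficients vanish for $k<-4m$ is an \emph{output} of the computation rather than an input; it is this output that confirms the Sun--Wang lower bound conjecture.

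The main obstacle I anticipate is purely one of scale. As $m$ approaches $20$, the ansatz in Step 1 acquires a large number of monomials in $E_4,E_6,a_i,b_j$, and the substitution \eqref{eq:abinABfull}, the partial-fraction decomposition \eqref{eq:PinQR}, and the matching conditions \eqref{eq:QScond} produce sizeable linear systems that must be solved \emph{exactly} over $\mathbb{Q}$ for every weight in the window $-5m\le k\le 0$. Keeping the arithmetic exact, so that dimensions are certified rather than estimated, and organizing the computation so that the full weight range is exhausted for each index, is where the real effort lies; the conceptual content is entirely contained in the freeness identity above together with the correctness of the Algorithm, which in turn rests on Theorems~\ref{thm:subset} and~\ref{thm:E8cond}.
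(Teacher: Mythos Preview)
Your proposal is correct and follows essentially the same approach as the paper: the proposition is a computational result obtained by running the Algorithm over the weight window $-5m\le k\le 0$ (justified by \cite[Proposition~6.1]{Wang:2018fil} and \cite[Proposition~6.4]{Sun:2021ije}) and then reading off the generator weights via the freeness of $J^{E_8}_{*,m}$ over $M_*$. Your write-up is in fact more explicit than the paper's, which simply states the outcome of the computation without spelling out the Poincar\'e-series identity $P^\rw_m=(1-x^4)(1-x^6)\sum_k(\dim J^{E_8}_{k,m})x^k$.
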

For $1\le m\le 13$, our results of $P^\rw_m$
are in perfect agreement with those
of \cite[Theorem 4.2]{Sun:2021ije}.
As explained in \cite{Sun:2021ije},
$P^\rw_m$ gives the dimension of the space $J^{E_8}_{k,m}$
of weak Jacobi forms of arbitrary weight $k$ and given index $m$ as
\begin{align}
\frac{P^\rw_m}{(1-x^4)(1-x^6)}
 =\sum_{k\in\bbZ}\mathrm{dim} J^{E_8}_{k,m}\, x^m.
\end{align}
%

%%%
\subsection{Lowest weight generators of free modules}
%%%

We further construct all $W(E_8)$-invariant Jacobi forms
of weight $k\le -4m$ and index $m$ for $m\le 28$.
We find that there are
no $W(E_8)$-invariant Jacobi forms of weight $k<-4m$
and index $m$ for $1\le m\le 28$.
This serves as a further supporting evidence
of the following conjecture:
\begin{conj}[{Sun and Wang \cite[Conjecture 6.1]{Sun:2021ije}}]
\label{conj:lowerbound}
The weight of non-zero $W(E_8)$-invariant weak Jacobi forms
of index $m$ is not less than $-4m$.
\end{conj}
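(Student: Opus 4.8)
The plan is a systematic, Fourier-free verification carried out index by index for $1\le m\le 28$, using the algorithm of Section~\ref{sec:results} as the computational engine. Two observations reduce the task to a finite search. First, because $-1\in W(E_8)$, combining Weyl invariance with the modular law \eqref{Modularprop} applied to $-I\in\grp{SL}_2(\bbZ)$ forces $(-1)^k=1$; hence only even weights occur, as the lists \eqref{eq:Pwlists} already reflect. Second, by Wang's lower bound \cite{Wang:2018fil} every non-zero form of index $m$ has weight $k\ge -5m$. Therefore it suffices, for each fixed $m$, to prove $\dim J^{E_8}_{k,m}=0$ for every even $k$ in the finite window $-5m\le k\le -4m-2$; establishing this for all $m\le 28$ is precisely the claim in the stated range.

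For a single pair $(k,m)$ I would execute the algorithm verbatim. Form the most general ansatz $P\in\cR$ of weight $k$ and index $m$ with undetermined coefficients; rewrite it through \eqref{eq:abinABfull} in terms of $\Delta,E_4,E_6,A_i,B_j$; and clear the lowest power of $\Delta$. Since $\Delta^{-1}$ is an allowed generator for weak forms, the only obstruction to holomorphicity sits at the zeros of $E_4$, captured by the pole parts $Q_l/E_4^l$ in the expansion \eqref{eq:PinQR}. By Theorem~\ref{thm:subset} every candidate Jacobi form already lies in $\cR$, and by Theorem~\ref{thm:E8cond} the ansatz is a genuine Jacobi form exactly when each pole part is admissible, i.e.\ when the divisibility relations $Q_l=(P_{16,5})^lS_l$ of \eqref{eq:QScond} hold for suitable polynomials $S_l(E_6,\{A_i\},\{B_j\})$. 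These relations are homogeneous and linear in the undetermined coefficients of $P$ and of the $S_l$, so I would solve the resulting homogeneous linear system and certify that only the trivial solution survives; a trivial solution space means $J^{E_8}_{k,m}=0$.

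The real difficulty is one of scale rather than idea, and the argument as described settles only the finite range $m\le 28$. As $m$ increases the ansatz $P$ and the auxiliary polynomials $S_l$ acquire rapidly growing numbers of monomials, the substitution \eqref{eq:abinABfull} yields bulky expressions, and the associated linear systems enlarge accordingly; organising this so that the conditions \eqref{eq:QScond} are imposed reliably and the emptiness of the solution space is genuinely certified is where the labour concentrates. I do not see how to promote this to the full Conjecture~\ref{conj:lowerbound}: a uniform proof would have to read the bound $-4m$ directly out of the holomorphicity criterion of Theorem~\ref{thm:E8cond}, controlling for all $m$ at once the interplay between the power of $\Delta$ cleared and the $E_4$-divisibility forced by $P_{16,5}$. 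That uniform control is the genuine obstacle, and is why the statement remains a conjecture supported here by explicit computation.
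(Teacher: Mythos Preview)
Your proposal is correct and matches the paper's own treatment: the statement is presented in the paper as a conjecture, not a theorem, and the paper likewise only verifies it for $1\le m\le 28$ by running the algorithm of Section~\ref{sec:results} and finding $J^{E_8}_{k,m}=0$ for all $k<-4m$ in that range. Your explicit delimitation of the finite search window via the parity constraint and Wang's $-5m$ bound, together with your acknowledgement that no uniform argument is available, is exactly the paper's position.
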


On the other hand, for every $12\le m\le 28$, we find 
$W(E_8)$-invariant Jacobi forms of weight $-4m$ and index $m$.
We summarize our results as follows:
\begin{prop}
Let the dimension of the space $J^{E_8}_{-4m,m}$ be described by
the series
\begin{align}
\mathcal{J}^{\mathrm{w},\lb}
 :=\sum_{m=0}^\infty\mathrm{dim}J^{E_8}_{-4m,m}x^m.
\end{align}
$\mathcal{J}^{\mathrm{w},\lb}$ is determined
up to the order of $x^{28}$ as
\begin{align}
\begin{aligned}
\mathcal{J}^{\mathrm{w},\lb}
&=1+x^4+2x^6+2x^8+x^9+4x^{10}+8x^{12}+2x^{13}+9x^{14}+5x^{15}
\\
&\hspace{1em}
+16x^{16}+6x^{17}+26x^{18}+12x^{19}+34x^{20}
+23x^{21}+52x^{22}
\\
&\hspace{1em}
+31x^{23}+80x^{24}+53x^{25}+105x^{26}+83x^{27}+154x^{28}
+O(x^{29}).
\end{aligned}
\end{align}
\end{prop}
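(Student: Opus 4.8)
The plan is to apply the algorithm of Section~\ref{sec:results} with weight $k=-4m$ and index $m$, separately for each $m$ in the range $0\le m\le 28$, and to read off $\dim J^{E_8}_{-4m,m}$ as the dimension of the space of solutions $P$ produced by that algorithm. The correctness of the procedure is precisely what Theorems~\ref{thm:subset} and~\ref{thm:E8cond} supply: Theorem~\ref{thm:subset} guarantees that every candidate lies in $\cR$, while Theorem~\ref{thm:E8cond} converts membership in $J^{E_8}_{*,*}$ into the concrete divisibility conditions \eqref{eq:QScond}. Consequently the proof of the proposition is nothing more than the faithful execution of the algorithm and the tabulation of its output into the series $\mathcal{J}^{\mathrm{w},\lb}$.

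Concretely, for each fixed $m$ I would carry out the six steps in order. First, I form the general ansatz $P$ of weight $-4m$ and index $m$ by extracting the $O(x^{-4m}y^m)$ part of the generating series in step~(1), fixing the undetermined coefficients $c_i$. Second, I substitute \eqref{eq:abinABfull} to rewrite $P$ in terms of $\Delta,E_4,E_6,A_i,B_j$, read off the lowest power $-n$ of $\Delta$, and pass to $\Delta^n P$. Third, I expand $\Delta^n P$ into its $E_4$-pole parts $Q_l$ and holomorphic remainder $R$ as in \eqref{eq:PinQR}. Fourth, for each $l=1,\dots,l_1$ I introduce the general polynomial $S_l$ in $E_6,A_i,B_j$ of the prescribed weight and index, with fresh coefficients $d_{l,i}$, and impose the identities $Q_l=(P_{16,5})^l S_l$ of \eqref{eq:QScond}. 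Fifth, I solve the combined homogeneous linear system in the $c_i$ and $d_{l,i}$; the dimension of its solution space, viewed through the surviving ansätze $P$, is exactly $\dim J^{E_8}_{-4m,m}$. Assembling these across $m$ yields the claimed $\mathcal{J}^{\mathrm{w},\lb}$.

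The hard part is computational scale rather than anything conceptual. At the top of the range $m\approx 28$ the weight reaches $-112$, so both $P$ and the comparison polynomials $(P_{16,5})^l S_l$ involve very large numbers of monomials in $E_6,A_i,B_j$, and \eqref{eq:QScond} must be matched coefficient by coefficient across all of them. All arithmetic must be performed exactly over $\mathbb{Q}$, since floating-point rounding could alter the rank of the linear system and hence the reported dimension; this forces the whole computation onto a computer algebra system. A useful control is that the generating series in steps~(1) and~(4) fix in advance the dimension of every polynomial space entering the calculation, so the size of each linear system — and therefore the feasibility of the computation — is bounded a priori.

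Finally, I would validate the output against \eqref{eq:Pwlists}. For $1\le m\le 20$ the coefficient of $x^{-4m}$ in $P^\rw_m$, obtained independently as the lowest-weight data of the full module, must equal the coefficient of $x^m$ in $\mathcal{J}^{\mathrm{w},\lb}$ (it records exactly the weight-$(-4m)$ generators, which is the whole of $J^{E_8}_{-4m,m}$ whenever $-4m$ is the minimal attained weight); for $1\le m\le 13$ both must further agree with \cite{Sun:2021ije}. In parallel I would check that the analogous system has only the trivial solution for every $k<-4m$ with $m\le 28$, which simultaneously provides the evidence for Conjecture~\ref{conj:lowerbound} and certifies that the nonzero entries of $\mathcal{J}^{\mathrm{w},\lb}$ genuinely sit at the bottom of each free module, so that no lowest-weight generators have been overlooked.
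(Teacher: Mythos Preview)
Your proposal is correct and matches the paper's approach: the proposition is stated as a computational result obtained by running the algorithm of Section~\ref{sec:results} at weight $k=-4m$ for each $m\le 28$ (and at $k<-4m$ to verify vanishing), with Theorems~\ref{thm:subset} and~\ref{thm:E8cond} providing the justification. The paper offers no further argument beyond this, so your description of the procedure, its validation against \eqref{eq:Pwlists} and \cite{Sun:2021ije}, and the accompanying check of Conjecture~\ref{conj:lowerbound} is exactly what is needed.
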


Based on these results, we consider
the subspace of $J^{E_8}_{*,*}$ given by
\begin{align}
\Jlb
 :=\bigoplus_{m=0}^\infty J^{E_8}_{-4m,m}.
\end{align}
Clearly, $\Jlb$ is a graded subalgebra of $J^{E_8}_{*,*}$ over $M_*$.
Since Conjecture~\ref{conj:lowerbound} is verified for $1\le m\le 28$, 
at least within this range of $m$,
determining generators of weight $-4m$ and index $m$ of $J^{E_8}_{*,*}$
is equivalent to determining those of $\Jlb$.
We determine generators of index $m$ of $\Jlb$
for $1\le m\le 28$.
\begin{prop}
Let $\dlb_m$ denote the number of generators of index $m$ of $\Jlb$.
For $1\le m\le 28$, $\dlb_m$ are determined as in Table~\ref{table:dlb}.
\end{prop}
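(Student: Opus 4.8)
The plan is to treat $\Jlb=\bigoplus_{m\ge 0}J^{E_8}_{-4m,m}$ as a graded commutative algebra whose index-$m$ component $J^{E_8}_{-4m,m}$ consists entirely of forms of the single weight $-4m$. A product of homogeneous elements of indices $m_1,\dots,m_k$ then lands in weight $-4(m_1+\cdots+m_k)$ and index $m_1+\cdots+m_k$, which is why $\Jlb$ is closed under multiplication; moreover the only elements of $M_*$ preserving the diagonal $k=-4m$ are constants, so for the purpose of counting generators $\Jlb$ behaves as a graded $\bbC$-algebra. Writing $I:=\bigoplus_{m\ge 1}J^{E_8}_{-4m,m}$ for its augmentation ideal and $(I^2)_m$ for the index-$m$ span of all products $fg$ with $f\in J^{E_8}_{-4m_1,m_1}$, $g\in J^{E_8}_{-4m_2,m_2}$, $m_1+m_2=m$, $m_1,m_2\ge 1$, the number of \emph{new} generators in index $m$ is the dimension of indecomposables,
\[
\dlb_m=\dim J^{E_8}_{-4m,m}-\dim (I^2)_m .
\]
So the whole statement reduces to computing, for each $m$, the full space and its decomposable part.

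For the full spaces, I would run the algorithm of Section~\ref{sec:results} at weight $k=-4m$ for every $1\le m\le 28$, obtaining an explicit basis of $J^{E_8}_{-4m,m}$ with each basis form written as a polynomial in $\cR=\bbC[E_4,E_6,a_i,b_j]$ (equivalently, by Theorem~\ref{thm:E8cond}, in $E_4,E_6,A_i,B_j,P_{16,5}/E_4,\Delta^{-1}$). By the preceding proposition these dimensions reproduce the coefficients of $\mathcal{J}^{\rw,\lb}$, fixing $\dim J^{E_8}_{-4m,m}$ throughout the range. In the same pass I would record that the algorithm returns the zero space for every weight $k<-4m$ with $m\le 28$; this is exactly the verification of Conjecture~\ref{conj:lowerbound} in this range, and it is needed to justify the reduction above.

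The decomposable part is then computed by induction on $m$. Assuming bases of $J^{E_8}_{-4m',m'}$ for all $m'<m$ (and the generators already selected at lower index) are in hand, I would form every product contributing to $(I^2)_m$; by Theorem~\ref{thm:subset} each such product is again an explicit element of $\cR$. Since $\cR$ is a polynomial ring whose monomials form a $\bbC$-basis, the span of these products is found by exact linear algebra over $\bbQ$: assemble the monomial-coefficient vectors of the products and of the basis of $J^{E_8}_{-4m,m}$ and compute ranks. This gives $\dim(I^2)_m$, hence $\dlb_m$, and choosing any complement of $(I^2)_m$ inside $J^{E_8}_{-4m,m}$ exhibits an explicit set of $\dlb_m$ generators. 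Carrying this out for $1\le m\le 28$ fills Table~\ref{table:dlb}.

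The main obstacle is computational rather than conceptual: as $m$ grows the number of monomials of the relevant weight and index, and the number of products feeding into $(I^2)_m$, grow rapidly, so forming and reducing large exact rational matrices to determine their ranks reliably is the bottleneck. The one point requiring care is that no decomposable element of the diagonal weight can arise from products involving a non-constant modular form in $M_*$; this is guaranteed precisely by the vanishing $J^{E_8}_{k,m}=0$ for $k<-4m$ established in the second step, which forces every product landing in weight $-4m$ to come from diagonal pieces alone. With that vanishing verified for all $m\le 28$, the count of $\dlb_m$ is unambiguous.
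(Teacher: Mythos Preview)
Your proposal is correct and matches the paper's (implicit) approach: the paper offers no proof beyond the computation itself, and what you describe---run the algorithm of Section~\ref{sec:results} at each $(k,m)=(-4m,m)$ to obtain explicit bases in $\cR$, then compute $\dim(I^2)_m$ by exact linear algebra on the products from lower index and set $\dlb_m=\dim J^{E_8}_{-4m,m}-\dim(I^2)_m$---is exactly the calculation behind Table~\ref{table:dlb}. Your observation that only constants in $M_*$ preserve the diagonal $k=-4m$, together with the verified vanishing $J^{E_8}_{k,m}=0$ for $k<-4m$ and $m\le 28$, is what the paper uses (one sentence earlier) to identify generators of $\Jlb$ with generators of $J^{E_8}_{*,*}$ lying on the diagonal; strictly speaking that argument is needed for the Corollary rather than for the Proposition itself, but it is correct.
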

\begin{table}[t]
{\footnotesize
\begin{align*}
\begin{array}
{|@{\,}c@{\,}||
@{\,}c@{\,}|@{\,}c@{\,}|@{\,}c@{\,}|@{\,}c@{\,}|@{\,}c@{\,}|
@{\,}c@{\,}|@{\,}c@{\,}|@{\,}c@{\,}|@{\,}c@{\,}|@{\,}c@{\,}|
@{\,}c@{\,}|@{\,}c@{\,}|@{\,}c@{\,}|@{\,}c@{\,}|@{\,}c@{\,}|
@{\,}c@{\,}|@{\,}c@{\,}|@{\,}c@{\,}|@{\,}c@{\,}|@{\,}c@{\,}|
@{\,}c@{\,}|@{\,}c@{\,}|@{\,}c@{\,}|@{\,}c@{\,}|@{\,}c@{\,}|
@{\,}c@{\,}|@{\,}c@{\,}|@{\,}c@{\,}|}\hline
m&\bx{1}&\bx{2}&\bx{3}&\bx{4}&\bx{5}&\bx{6}&\bx{7}&
\bx{8}&\bx{9}&\bx{10}&\bx{11}&\bx{12}&\bx{13}&\bx{14}&
\bx{15}&\bx{16}&\bx{17}&\bx{18}&\bx{19}&\bx{20}&\bx{21}&
\bx{22}&\bx{23}&\bx{24}&\bx{25}&\bx{26}&\bx{27}&\bx{28}\\ \hline
\dlb_m&0&0&0&1&0&2&0&1&1&2&0&3&1&3&3&3&3&4&3&3&4&2&3&2&3&1&1&1\\ \hline
\end{array}
\end{align*}
\vspace{-3ex}
}
\caption{Number of generators of $\Jlb$}\label{table:dlb}
\end{table}
One sees that $\dlb_m>0$ for all $12\le m\le 28$.
This implies the following corollary:
\begin{cor}
The generators of $J^{E_8}_{*,*}$
must include those of weight $-4m$ and index $m$
with all $12\le m \le 28$.
\end{cor}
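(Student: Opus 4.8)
The plan is to deduce the corollary directly from the positivity of $\dlb_m$ recorded in Table~\ref{table:dlb}, together with the lower bound of Conjecture~\ref{conj:lowerbound}, which has been verified in the preceding propositions for all $1\le m\le 28$. The essential observation is that weight and index are both additive under multiplication of Jacobi forms, whereas the generators $E_4,E_6$ of $M_*$ carry strictly positive weight and index zero. Fix $m$ with $12\le m\le 28$ and take any generating set $\{g_\alpha\}$ of $J^{E_8}_{*,*}$ over $M_*$, which we may assume to be bihomogeneous since $J^{E_8}_{*,*}$ is a bigraded algebra; write $k_\alpha,m_\alpha$ for the weight and index of $g_\alpha$. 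First I would record that, since the lower bound holds for every index occurring here (all indices appearing are $\le m\le 28$), each generator satisfies $k_\alpha\ge -4m_\alpha$.

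Next I would expand an arbitrary $\varphi\in J^{E_8}_{-4m,m}$ as a polynomial in the $g_\alpha$ with coefficients in $M_*=\bbC[E_4,E_6]$, and examine a single monomial $c\,E_4^aE_6^b\,g_{\alpha_1}\cdots g_{\alpha_k}$ (with $c\in\bbC$) that contributes to $\varphi$. Matching index forces $\sum_j m_{\alpha_j}=m$, while matching weight forces $4a+6b+\sum_j k_{\alpha_j}=-4m$. Applying $k_{\alpha_j}\ge -4m_{\alpha_j}$ gives $\sum_j k_{\alpha_j}\ge -4m$, so the weight equation can hold only if $a=b=0$ and $k_{\alpha_j}=-4m_{\alpha_j}$ for every factor. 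Thus the only monomials that can produce an element of $J^{E_8}_{-4m,m}$ are products, carrying no $E_4$ or $E_6$ factor, of generators lying on the diagonal $k_\alpha=-4m_\alpha$, i.e.\ of generators belonging to $\Jlb$.

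This shows that $J^{E_8}_{-4m,m}$ is spanned, as a $\bbC$-vector space, by products of those generators $g_\alpha\in\Jlb$ whose total index equals $m$. Equivalently, the generators of $J^{E_8}_{*,*}$ that happen to lie in $\Jlb$ already generate $\Jlb$ as a graded $\bbC$-algebra, and conversely every algebra generator of $\Jlb$ of index $m$ is forced to be an algebra generator of $J^{E_8}_{*,*}$ of weight $-4m$ and index $m$; this is the equivalence announced just before the corollary. Hence the number of index-$m$, weight-$(-4m)$ generators that any generating set of $J^{E_8}_{*,*}$ must contain is exactly $\dlb_m$, the number of index-$m$ generators of $\Jlb$. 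Since Table~\ref{table:dlb} gives $\dlb_m>0$ for every $12\le m\le 28$, such generators are unavoidable, which is the assertion.

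I expect the only delicate point to be the passage from ``spanned by diagonal products'' to a statement about \emph{any} generating set: one must ensure that a form counted by $\dlb_m$ genuinely fails to lie in the subalgebra generated by lower-index elements. This is precisely what the definition of $\dlb_m$ as the number of new generators of $\Jlb$ needed at index $m$ guarantees. Everything else reduces to the bookkeeping of the additive weight and index gradings, which is routine once the verified lower bound of Conjecture~\ref{conj:lowerbound} is invoked.
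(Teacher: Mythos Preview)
Your proposal is correct and follows exactly the line the paper indicates: it uses the verified lower bound (Conjecture~\ref{conj:lowerbound} for $1\le m\le 28$) to reduce the question to generators of $\Jlb$, and then invokes $\dlb_m>0$ from Table~\ref{table:dlb}. The paper merely asserts this equivalence in the paragraph preceding the corollary without spelling out the weight-index bookkeeping; your argument supplies precisely those details.
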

It is worth noting that $\Jlb$ is not freely generated,
i.e.~some elements of $\Jlb$ are not uniquely
expressed in terms of the generators.
This is clearly seen from
\begin{align}
\prod_{m=1}^\infty\frac{1}{(1-x^m)^{\dlb_m}}
 -\mathcal{J}^{\mathrm{w},\lb}
 =3x^{24}+2x^{25}+5x^{26}+6x^{27}+14x^{28}+O(x^{29}).
\end{align}
One sees that some algebraic relations among polynomials
of generators start appearing at index $24$.
This also means that $J^{E_8}_{*,*}$ is not freely generated.

The above fact and the existence of a large number of generators
give enough reason to conclude that
the structure of $J^{E_8}_{*,*}$ is highly complicated.
On the other hand, 
the modest behavior of $\dlb_m$ as a function of $m$
at large $m$ might be an indication that
$J^{E_8}_{*,*}$ is finitely generated,
as conjectured in \cite[Conjecture 6.7]{Sun:2021ije}.

%%%%%%%%%%%%%%%%%%%%%%%%%%%%%%%%%%%%%%%%%%%%%%%%%%%%%%%%%%%%%%%%%%%%%%%%
\vspace{2ex}

\begin{center}
  {\bf Acknowledgments}
\end{center}

The author is grateful to Haowu Wang for discussions and
for valuable comments on a preliminary version of the manuscript.
The author would also like to thank Kaiwen Sun for discussions.
This work was supported in part by JSPS KAKENHI Grant Number 19K03856
and JSPS Japan--Russia Research Cooperative Program.

\vspace{2ex}

%%%%%%%%%%%%%%%%%%%%%%%%%%%%%%%%%%%%%%%%%%%%%%%%%%%%%%%%%%%%%%%%%%%%%%%%
%%% Appendices %%%

\appendix
\renewcommand{\theequation}{\Alph{section}.\arabic{equation}}

%%%%%%%%%%%%%%%%%%%%%%%%%%%%%%%%%%%%%%%%%%%%%%%%%%%%%%%%%%%%%%%%%%%%%%%%
\section{Simple roots and fundamental weights of $E_8$}\label{app:E8}
%%%%%%%%%%%%%%%%%%%%%%%%%%%%%%%%%%%%%%%%%%%%%%%%%%%%%%%%%%%%%%%%%%%%%%%%

Let $\{\vece_j\}\ (j=1,2,\ldots,8)$ be the orthonormal basis
of $\bbC^8$.
We take the simple roots of $E_8$ as
\begin{align}
\begin{aligned}
\vecal_1^{E_8}&=\tfrac{1}{2}\left(
  \vece_1-\vece_2-\vece_3-\vece_4
 -\vece_5-\vece_6-\vece_7+\vece_8
  \right),\\
\vecal_2^{E_8}&=\vece_1+\vece_2,\\
\vecal_j^{E_8}&=-\vece_{j-2}+\vece_{j-1}\quad(j=3,4,\ldots,8).
\end{aligned}
\end{align}
The fundamental weights of $E_8$ are then given by
\begin{align}
\begin{aligned}
\vecL_1^{E_8}&=2\vece_8,\\
\vecL_2^{E_8}&=\tfrac{1}{2}\vece_1+\tfrac{1}{2}\vece_2
  +\tfrac{1}{2}\vece_3+\tfrac{1}{2}\vece_4
  +\tfrac{1}{2}\vece_5+\tfrac{1}{2}\vece_6
  +\tfrac{1}{2}\vece_7+\tfrac{5}{2}\vece_8,\\
\vecL_3^{E_8}&=-\tfrac{1}{2}\vece_1+\tfrac{1}{2}\vece_2
  +\tfrac{1}{2}
\vece_3+\tfrac{1}{2}\vece_4+\tfrac{1}{2}\vece_5
  +\tfrac{1}{2}\vece_6+\tfrac{1}{2}\vece_7
  +\tfrac{7}{2}\vece_8,\\
\vecL_4^{E_8}&=\vece_3+\vece_4+\vece_5
  +\vece_6+\vece_7+5\vece_8,\\
\vecL_5^{E_8}&=\vece_4+\vece_5+\vece_6
  +\vece_7+4\vece_8,\\
\vecL_6^{E_8}&=\vece_5+\vece_6+\vece_7+3\vece_8,\\
\vecL_7^{E_8}&=\vece_6+\vece_7+2\vece_8,\\
\vecL_8^{E_8}&=\vece_7+\vece_8.
\end{aligned}
\end{align}
%

%%%%%%%%%%%%%%%%%%%%%%%%%%%%%%%%%%%%%%%%%%%%%%%%%%%%%%%%%%%%%%%%%%%%%%%%
\section{Special functions}\label{app:functions}
%%%%%%%%%%%%%%%%%%%%%%%%%%%%%%%%%%%%%%%%%%%%%%%%%%%%%%%%%%%%%%%%%%%%%%%%

The Jacobi theta functions are defined as
\begin{align}
\begin{aligned}
\varth_1(z,\tau)&:=
 \ri\sum_{n\in\bbZ}(-1)^n y^{n-1/2}q^{(n-1/2)^2/2},\\
\varth_2(z,\tau)&:=
  \sum_{n\in\bbZ}y^{n-1/2}q^{(n-1/2)^2/2},\\
\varth_3(z,\tau)&:=
  \sum_{n\in\bbZ}y^n q^{n^2/2},\\
\varth_4(z,\tau)&:=
  \sum_{n\in\bbZ}(-1)^n y^n q^{n^2/2},
\end{aligned}
\end{align}
where
\begin{align}
y=e^{2\pi \ri z},\qquad q=e^{2\pi \ri\tau}
\end{align}
and $z\in\bbC, \tau\in\bbH$.
We often use the following abbreviated notation
\begin{align}
\varth_k(\tau):=\varth_k(0,\tau).
\end{align}
The Dedekind eta function is defined as
\begin{align}
\eta(\tau):=q^{1/24}\prod_{n=1}^\infty(1-q^n).
\end{align}
The Eisenstein series are given by
\begin{align}
E_{2n}(\tau)
 =1-\frac{4n}{B_{2n}}\sum_{k=1}^{\infty}\frac{k^{2n-1}q^k}{1-q^k}
\end{align}
for $n\in\bbZ_{>0}$. The Bernoulli numbers $B_k$ are defined by
\begin{align}
\frac{x}{e^x-1}=\sum_{k=0}^\infty\frac{B_k}{k!}x^k.
\end{align}
We often abbreviate $\eta(\tau),\,E_{2n}(\tau)$
as $\eta,\,E_{2n}$ respectively.

%%%%%%%%%%%%%%%%%%%%%%%%%%%%%%%%%%%%%%%%%%%%%%%%%%%%%%%%%%%%%%%%%%%%%%%%
%%%%%%%%%%%%%%%%%%%%%%%%%%%%%%%%%%%%%%%%%%%%%%%%%%%%%%%%%%%%%%%%%%%%%%%%
%%%%%%%%%%%%%%%%%%%%%%%%%%%%%%%%%%%%%%%%%%%%%%%%%%%%%%%%%%%%%%%%%%%%%%%%
%%% References %%%

\renewcommand{\section}{\subsection}
\renewcommand{\refname}{\bf References}

\end{document}